\pgfplotsset{compat=newest}
    \pgfpathrectanglecorners{\pgfpointorigin}{\pgfpoint{3cm}{3cm}}%
\theoremstyle{plain}
\newtheorem{theorem}{Theorem}[section]
\newtheorem{lemma}[theorem]{Lemma}
\newtheorem{proposition}[theorem]{Proposition}
\theoremstyle{definition}
\newtheorem{remark}{Remark}[section]
\DeclareMathOperator{\dv}{div}
\DeclareMathOperator{\curl}{curl}
\DeclareMathOperator{\grad}{grad}
\newcommand{\textoverline}[1]{$\overline{\mbox{#1}}$}
\newcommand{\plotsnapshots}[1]{
\includegraphics[scale=0.2,trim=16in 0in 15.5in 0in,clip=true]{#1t0.png}
\includegraphics[scale=0.2,trim=16in 0in 15.5in 0in,clip=true]{#1t1.png}
\includegraphics[scale=0.2,trim=16in 0in 15.5in 0in,clip=true]{#1t2.png}
\includegraphics[scale=0.2,trim=16in 0in 15.5in 0in,clip=true]{#1t3.png}
\includegraphics[scale=0.2,trim=16in 0in 15.5in 0in,clip=true]{#1t4.png}
\includegraphics[scale=0.2,trim=16in 0in 15.5in 0in,clip=true]{#1t5.png}
}
\newcommand{\plotinvariants}[1]{
\addplot[blue,thick] table [x expr=\coordindex*0.005, y expr=abs(\thisrowno{0})]{#1};
\addplot[red,thick] table [x expr=\coordindex*0.005, y expr=abs(\thisrowno{1})]{#1};
\addplot[black,thick] table [x expr=\coordindex*0.005, y expr=abs(\thisrowno{2})]{#1};
\addplot[blue,dashed,thick] table [x expr=\coordindex*0.005, y expr=abs(\thisrowno{3})]{#1};
}
\begin{document}

%\title{A Method for Inhomogeneous, Incompressible MHD that Preserves Energy, Cross-Helicity, Magnetic Helicity, Incompressibility, and $\dv B = 0$}
\title{A structure-preserving finite element method for compressible ideal and resistive MHD}

\author{Evan S. Gawlik\thanks{\noindent Department of Mathematics,  University of Hawai`i at M\textoverline{a}noa, \href{egawlik@hawaii.edu}{egawlik@hawaii.edu}} \; and \; Fran\c{c}ois Gay-Balmaz\thanks{\noindent CNRS - LMD, Ecole Normale Sup\'erieure, \href{francois.gay-balmaz@lmd.ens.fr}{francois.gay-balmaz@lmd.ens.fr}}}

\date{}

\maketitle

\begin{abstract}
We construct a structure-preserving finite element method and time-stepping scheme for compressible barotropic magnetohydrodynamics (MHD) both in the ideal and resistive cases, and in the presence of viscosity. The method is deduced from the geometric variational formulation of the equations. It preserves the balance laws governing the evolution of total energy and magnetic helicity, and preserves mass and the constraint $ \operatorname{div}B = 0$ to machine precision, both at the spatially and temporally discrete levels. In particular, conservation of energy and magnetic helicity hold at the discrete levels in the ideal case. It is observed that cross helicity is well conserved in our simulation in the ideal case.
\end{abstract}

\section{Introduction}

In this paper we develop a structure-preserving finite element method for the compressible barotropic MHD equations with viscosity and resistivity on a bounded domain $ \Omega \subset \mathbb{R} ^d$, $d \in \{2,3\}$. These equations seek a velocity field $u$, density $ \rho  $, and magnetic field $B$ such that
\begin{align}
\rho(\partial_t u + u \cdot \nabla u) - \operatorname{curl} B \times B &= -\nabla p + \mu \Delta u + ( \lambda + \mu ) \nabla \operatorname{div}u  & \text{ in } \Omega \times (0,T), \label{velocity0} \\
\partial_t B -  \operatorname{curl}(u \times B) &= - \nu \operatorname{curl}\operatorname{curl}B, & \text{ in } \Omega \times (0,T), \label{magnetic0} \\
\partial_t \rho + \dv (\rho u) &= 0, & \text{ in } \Omega \times (0,T), \label{density0} \\
\dv B &= 0, & \text{ in } \Omega \times (0,T), \label{incompressible0} \\
%u = B \times  n &= 0, & \text{ on } \partial\Omega \times (0,T), \label{BC} \\
u = B \cdot  n = \curl B \times n &= 0, & \text{ on } \partial\Omega \times (0,T), \label{BC} \\
u(0) = u_0, \, B(0) = B_0, \,  \rho(0) &= \rho_0, & \text{ in } \Omega, \label{IC}
\end{align}
where $p=p( \rho  )$ is the pressure, $ \mu $ and $ \lambda $ are the fluid viscosity coefficients satisfying $ \mu >0$ and $2 \mu + 3 \lambda \geq 0$, and $ \nu >0$ is the resistivity coefficient.

The case $ \mu = \lambda = \nu =0$ corresponds to ideal non-viscous barotropic MHD, for which the boundary conditions \eqref{BC} are replaced by $u \cdot n|_{ \partial \Omega }= B \cdot n|_{ \partial \Omega }=0$.

%These equations describe the dynamics of electrically conducting fluids in a and are of fundamental use in geophysics, astrophysics, and engineering applications.

Much of the literature on structure-preserving methods in MHD simulation has focused on the incompressible and ideal case, with constant density \cite{GaMuPaMaDe2011,HiLiMaZh2018,HuLeXu2020,HuMaXu2017,KrMa2017,LiWa2001,HuXu2019} and with variable density \cite{GaGB2021}. These methods have succeeded in preserving at the discrete levels several invariants and constraints of the continuous system. For instance, in \cite{GaGB2021} a finite element method was proposed which preserves energy, cross-helicity (when the fluid density is constant), magnetic helicity, mass, total squared density, pointwise incompressibility, and the constraint $ \operatorname{div}B=0$  to machine precision, both at the spatially and temporally discrete levels. Little attention has been paid to the development of structure-preserving methods for MHD in the compressible ideal or resistive case. For instance, in the resistive case, energy, magnetic helicity, and cross-helicity are not preserved and their evolution is governed by balance laws showing the impact of resistivity on the dynamics of these quantities. In order to accurately simulate the effect of resistivity in simulations of compressible MHD, it is highly desirable to exactly reproduce these laws at the discrete level. The discrete conservation laws for these quantities are then automatically satisfied in the ideal case, which extend similar properties obtained earlier in the incompressible setting.

In this paper, we construct a structure-preserving finite element method and time-stepping scheme for the compressible MHD system \eqref{velocity0}--\eqref{IC}. The method is deduced from the geometric variational formulation of the equations arising from the Hamilton principle on the diffeomorphism group of fluid motion. It preserves the balance laws governing the evolution of total energy and magnetic helicity, and preserves mass and the constraint $ \operatorname{div}B = 0$ to machine precision, both at the spatially and temporally discrete levels. In particular, conservation of energy and magnetic helicity hold at the discrete levels in the ideal case.

The approach we develop in this paper is built on our earlier work on conservative methods for compressible fluids \cite{GaGB2020} and for incompressible MHD with variable density in \cite{GaGB2019,GaGB2021}.  Two notable differences that arise in the viscous, resistive, compressible setting are the change in boundary conditions for the velocity and magnetic fields, and the fact that the magnetic field is not advected as a vector field when the fluid is compressible; that is, $\curl(B \times u)$ does not coincide with the Lie derivative of the vector field $B$ along $u$ when $\dv u \neq 0$.
%Two notable differences with these two cases are the changes of boundary conditions for the velocity field and magnetic fields in the viscous and resistive cases, and the fact that the advection of the magnetic field in compressible fluids do not coincide anymore with the advection of a vector field.

%\begin{framed} 
%FGB: I don't know if we want to mention the work  \cite{DiMa2020}. Some possible comments are:

%We refer to \cite{DiMa2020} for a recent wok on the finite element discretization of compressible MHD, in which a stable numerical scheme based on discontinuous Galerkin method is proposed.\\
%Maybe later, after Remark 3.2, we can comment on the finite element spaces used there:
%\medskip

%Note: except stability, in \cite{DiMa2020} the preservation of invariants is not discussed and the boundary condition for $B$ are different ($B \times n=0$)

%They have the finite element spaces\\
%$u \in \{ u \in H^1_0( \Omega ) \mid u|_K \in P_2( \Omega )\}$ (like us with $r=1$),\\
%$ \rho  \in DG_0( \mathcal{T} _h )$ (like us with $s=0$),\\
%$B \in \{ u \in H_0(\curl,\Omega) \mid \left. u \right|_K \in P_0(K)^3 + x \times P_0(K)^3, \, \forall K \in \mathcal{T}_h \}$ (not like us because different boundary conditions)\\
%(not even the order of the polynomials matches since we have  $B \in \{u \in H_0(\dv,\Omega) \mid \left. u \right|_K \in P_1(K)^d + xP_1(K), \, \forall K \in \mathcal{T}_h\}$ when $r=1$, isn't this strange?)
%\end{framed} 

\section{Geometric variational formulation for MHD}

In this section we review the Hamilton principle for ideal MHD as well as the associated Euler-Poincar\'e variational formulation. We then extend the resulting form of equations to include viscosity and resistivity and examine how the balance of energy, magnetic helicity, and cross-helicity emerge from this formulation.

\paragraph{Lagrangian variational formulation for ideal MHD.} Assume that the fluid moves in a compact domain $ \Omega \subset \mathbb{R} ^3$ with smooth boundary. We denote by $\operatorname{Diff}( \Omega )$ the group of diffeomorphisms of $ \Omega $ and by $ \varphi :[0,T] \rightarrow \operatorname{Diff}( \Omega )$ the fluid flow. The associated motion of a fluid particle with label $X \in \Omega $ is $x= \varphi (t,X)$.

\medskip

When $ \nu =0$, the equation for the magnetic field reduces to $ \partial _tB -  \operatorname{curl} (u \times B)=0$, which can be equivalently rewritten in geometric terms as $\partial _t (B \cdot {\rm d}s)+ \pounds _u ( B \cdot {\rm d} s)=0$ with $ \pounds _u(B \cdot {\rm d}s)$ the Lie derivative of the closed $2$-form $B \cdot {\rm d}s$.
Consequently, from the properties of Lie derivatives, the time evolution of the magnetic field is given by the push-forward operation on 2-forms as
\begin{equation}\label{B_frozen} 
B(t) \cdot {\rm d}s= \varphi (t)_* ( \mathcal{B} _0 \cdot {\rm d} S)
\end{equation}
for some time independent reference magnetic field $ \mathcal{B} _0(X)$. This describes the fact that the magnetic field is frozen in the flow. Similarly, from the continuity equation $ \partial  _t \rho  + \operatorname{div}( \rho  u)=0$, the evolution of the mass density is given by the push-forward operation on 3-forms as
\[
\rho  (t) {\rm d}^3 x = \varphi (t)_* ( \varrho _0 {\rm d}^3X),
\]
for some time independent reference mass density $ \varrho _0(X)$.

\medskip

From these considerations, if follows that the ideal MHD motion is completely characterized by the fluid flow $ \varphi (t)  \in  \operatorname{Diff}( \Omega )$ and the given reference fields $ \varrho _0$ and $ \mathcal{B} _0$. The Hamilton principle for this system reads
\begin{equation}\label{HP_MHD} 
\delta \int_0^T L( \varphi , \partial _t \varphi , \varrho_0, \mathcal{B}_0) {\rm d}t=0,
\end{equation} 
with respect to variations $ \delta \varphi $ vanishing at $t=0,T$, and yields the equations of motion in Lagrangian coordinates. In \eqref{HP_MHD} the Lagrangian function $L$ depends on the fluid flow $ \varphi (t)$ and its time derivative $ \partial _t\varphi (t)$ forming an element $( \varphi , \partial _t \varphi )$  in the tangent bundle $T \operatorname{Diff}( \Omega ) $ to $\operatorname{Diff}( \Omega ) $, and also parametrically on the given $ \varrho _0$, $ \mathcal{B} _0$. From the relabelling symmetries, $L$ must be invariant under the subgroup $ \operatorname{Diff}( \Omega )_{ \varrho _0, \mathcal{B}_0 } \subset \operatorname{Diff}( \Omega ) $ of diffeomorphisms that preserve $ \varrho _0$ and $ \mathcal{B} _0$, i.e., diffeomorphisms $ \psi \in \operatorname{Diff}( \Omega )$ such that 
\[
\psi ^*( \varrho _0 {\rm d}^3X) = \varrho _0 \quad\text{and}\quad \psi ^* ( \mathcal{B} _0 \cdot {\rm d}S) = \mathcal{B} _0 \cdot {\rm d}S,
\]
i.e., we have
\begin{equation}\label{invariance} 
L( \varphi \circ \psi , \partial _t( \varphi \circ \psi ), \varrho _0, \mathcal{B} _0)= L( \varphi , \partial _t \varphi , \varrho_0, \mathcal{B}_0), \quad \forall\; \psi \in  \operatorname{Diff}( \Omega )_{ \varrho _0, \mathcal{B}_0 } \subset \operatorname{Diff}( \Omega ) .
\end{equation} 
From this invariance, $L$ can be written in terms of Eulerian variables as
\begin{equation}\label{L_ell} 
L( \varphi , \partial _t \varphi , \varrho_0, \mathcal{B}_0)=\ell(u, \rho  , B)
\end{equation} 
where
\begin{equation} \label{Eul_variables}
u = \partial _t \varphi \circ \varphi ^{-1} , \qquad \rho \, {\rm d}^3 x = \varphi _* ( \varrho _0 {\rm d}^3 X), \qquad B \cdot {\rm d s} = \varphi _* ( \mathcal{B} _0 \cdot {\rm d} S),
\end{equation}
thereby yielding the symmetry reduced Lagrangian $\ell(u, \rho  , B)$ in the Eulerian description.
In terms of $\ell$, Hamilton's principle \eqref{HP_MHD} reads
\begin{equation}\label{EP_MHD} 
\delta \int_0^T\ell(u, \rho  , B) {\rm d}t=0,
\end{equation} 
with respect to variations of the form 
\begin{equation}\label{EP_constraints} 
\delta u = \partial _t v+ \pounds _uv, \qquad \delta \rho  = - \operatorname{div}( \rho  v), \qquad \delta B = \operatorname{curl} ( v \times B) 
\end{equation} 
where $v:[0,T] \rightarrow \mathfrak{X} ( \Omega )$ is an arbitrary time dependent vector field with $v(0)=v(T)=0$ and $ \pounds _uv=[u,v]$ is the Lie derivative of vector fields. Here $ \mathfrak{X} ( \Omega )$ denotes the space of vector fields $u$ on $ \Omega $ with $u \cdot n=0$ on $ \partial \Omega $, viewed as the Lie algebra of $ \operatorname{Diff}( \Omega )$. We recall that $B \cdot n=0$ on $ \partial \Omega $, a condition that is preserved by the evolution \eqref{B_frozen}. The passing from \eqref{HP_MHD} to \eqref{EP_MHD}  is a special instance of the process of Euler-Poincar\'e reduction for invariant systems on Lie groups, see \cite{HoMaRa1998}. A direct application of \eqref{EP_MHD}--\eqref{EP_constraints} yields the fluid momentum equations in the form
\begin{equation}\label{velocity} 
\left\langle \partial_t \frac{\delta \ell}{\delta u} , v \right\rangle + a\left(\frac{\delta \ell}{\delta u}, u, v\right)  + b\left( \frac{\delta \ell}{\delta \rho},\rho,v \right) + c\left(\frac{\delta \ell}{\delta B},B,v\right)=0,
\end{equation} 
for all $v$ with $v \cdot n=0$, with the trilinear forms
\begin{align*} 
a(w,u,v)&= -\int_ \Omega  w \cdot [u,v]\,{\rm d}x \\
b( \sigma , \rho  ,v)&= - \int_ \Omega  \rho    \nabla \sigma \cdot v \,{\rm d}x\\
c(C,B,v)&= \int_ \Omega  C \cdot \operatorname{curl}   (B \times v) \,{\rm d}x.
\end{align*}
The equations for $ \rho  $ and $B$ follow from their definition in \eqref{Eul_variables}, which are expressed in terms of $b$ and $c$ as
\begin{align}
\langle \partial_t \rho, \sigma \rangle + b(\sigma,\rho,u) &= 0, \quad \forall\; \sigma    \label{density} \\
\langle \partial_t B, C \rangle + c(C,B,u) &= 0, \quad \forall\; C, \quad C \cdot n|_{ \partial \Omega }=0.\label{magnetic}
\end{align}
Equation \eqref{velocity} yields the general Euler-Poincar\'e form of the equations for arbitrary Lagrangian $\ell(u, \rho  , B)$ as
\begin{equation}\label{EP_equations} 
\partial _t \frac{\delta \ell}{\delta u} + \pounds _u \frac{\delta \ell}{\delta u} = \rho  \nabla \frac{\delta \ell}{\delta \rho  }+ B \times \operatorname{curl}\frac{\delta \ell}{\delta B}  
\end{equation} 
where in the second term we employed the notation $\pounds _u m=  \operatorname{curl} m\times u + \nabla ( u \cdot m) + m \operatorname{div}u$.

\medskip 

The Lagrangian for barotropic MHD is
\begin{equation}\label{barotropic_ell} 
\ell(u, \rho  , B)= \int_ \Omega  \Big[\frac{1}{2} \rho  |u|^2 - \epsilon ( \rho  ) - \frac{1}{2} |B| ^2 \Big]\,{\rm d}x
\end{equation} 
with $ \epsilon ( \rho  )$ the energy density. Using
\[
\frac{\delta \ell}{\delta u}= \rho  u, \qquad \frac{\delta \ell}{\delta \rho  } = \frac{1}{2} |u| ^2 - \frac{\partial \epsilon }{\partial \rho  } , \qquad \frac{\delta  \ell}{\delta  B} = - B
\]
in \eqref{EP_equations} yields the barotropic MHD equations \eqref{velocity0} with $ \mu = \lambda =0$.

\medskip 

Extension to full compressible ideal MHD subject to gravitational and Coriolis forces is easily achieved by including the entropy density $s$ in the variational formulation and considering the Lagrangian function
\begin{equation} \label{baroclinic_ell}
\ell(u, \rho ,s , B)= \int_ \Omega  \Big[\frac{1}{2} \rho  |u|^2 + \rho  R \cdot u - \epsilon ( \rho ,s ) - \rho  \phi - \frac{1}{2} |B| ^2\Big] \,{\rm d} x,
\end{equation}
with $ \phi $ the gravitational potential and a vector field $R$ such that $ \operatorname{curl}R= 2 \omega $ with $ \omega $ the angular velocity of the fluid domain.

\paragraph{Viscous and resistive MHD.} Viscosity and resistivity are included in the formulation \eqref{velocity}--\eqref{magnetic}  
by defining the symmetric bilinear forms
\begin{equation}\label{bilinear_form} 
d(u,v)= -  \int_ \Omega \Big[\mu\nabla u: \nabla v  +   ( \lambda + \mu ) \operatorname{div}u \operatorname{div}v\Big]\, {\rm d}x , \qquad e(B,   C) =  -\nu \int_ \Omega  \operatorname{curl}B \cdot \operatorname{curl}   C \,{\rm d} x
\end{equation}
and considering the no slip boundary condition $u|_{ \partial \Omega }=0$ for the velocity. This corresponds in the Lagrangian description to the choice of the subgroup $ \operatorname{Diff}_0( \Omega ) $ of diffeomorphisms fixing the boundary pointwise.
The viscous and resistive barotropic MHD equations with Lagrangian $\ell(u, \rho  , B)$ can be written as follows: seek $u$, $ \rho  $, $B$ with $u|_{ \partial \Omega }=0$ and $B \cdot n|_{ \partial \Omega }=0$ such that
\begin{align}
\left\langle \partial_t \frac{\delta \ell}{\delta u} , v \right\rangle + a\left(\frac{\delta \ell}{\delta u}, u, v\right)  + b\left( \frac{\delta \ell}{\delta \rho},\rho,v \right) + c\left(\frac{\delta \ell}{\delta B},B,v\right)&= d(u,v) &  &\forall\,v, \;\; \text{$v|_{ \partial \Omega }=0$}\label{velocity_res_NS} \\
\langle \partial_t \rho, \sigma \rangle + b(\sigma,\rho,u) &=0, \quad & &\forall\,\sigma \label{density_res_NS}\\
\phantom{\int}\langle \partial_t B, C \rangle + c(C,B,u) &= e(B,C), \quad & &\forall\, C, \;\; C \cdot n|_{ \partial \Omega }=0.\label{magnetic_res_NS}
\end{align}
The boundary condition $ \operatorname{curl}B \times n|_{ \partial \Omega }=0$ emerges from the last equation, while the condition $ \operatorname{div}B(t)=0$ holds if it holds at initial time. For the Lagrangian \eqref{barotropic_ell}, the system \eqref{velocity0}--\eqref{BC} is recovered. While the system \eqref{velocity_res_NS}--\eqref{magnetic_res_NS} is obtained by simply appending the bilinear forms $d$ and $e$ to the Euler-Poincar\'e equations, this system can also be obtained by a variational formulation of Lagrange-d'Alembert type, which extends the Euler-Poincar\'e formulation \eqref{EP_MHD}--\eqref{EP_constraints}, see Appendix~\ref{LdA}.

\paragraph{Balance laws for important quantities.} The balance of total energy $ \mathcal{E}  = \left\langle \frac{\delta \ell}{\delta u}, u \right\rangle -\ell(u, \rho  , B)$ associated to a Lagrangian $\ell$ is found as
\[
\frac{d}{dt} \mathcal{E}  = d(u,u) - e \left( B, \frac{\delta \ell}{\delta B} \right) 
\]
and, in the Euler-Poincar\'e formulation \eqref{velocity_res_NS}--\eqref{magnetic_res_NS}, follows from the property
\[
a(w,u,v)=- a(w,v,u), \quad \forall\; u,v,w
\]
of the trilinear form $a$.

The conservation of total mass $\int_ \Omega \rho \, {\rm d} x$ follows from the property
\[
b(1, \rho  , v)=0, \quad \forall\; \rho  , v.
\]

If $A$ is any vector field satisfying $ \operatorname{curl}A=B$ and $A \times n|_{ \partial \Omega }=0$, the balance of magnetic helicity $\int_ \Omega A \cdot B{\rm d}x$ is found as follows:
\begin{align*} 
\frac{d}{dt} \int_ \Omega A \cdot B{\rm d}x&= \left\langle \partial _t A, B \right\rangle + \left\langle A, \partial _t B \right\rangle \\
&=  \left\langle \partial _t A, \operatorname{curl}   A \right\rangle + \left\langle A,  \partial _t B \right\rangle\\
&=  \left\langle \operatorname{curl}  \partial _t A,   A \right\rangle  + \left\langle A,  \partial _t B \right\rangle\\
&= 2 \left\langle \partial _t B, A \right\rangle \\
&= - 2 c(A,B,u)+ 2e(B,A)\\
&= 2e(B,A),
\end{align*} 
where in the third equality we used $A \times n|_{ \partial \Omega }=0$, in the fifth equality we used \eqref{magnetic_res_NS}, and in the last one we used the following property of $c$:
\[
c(A,B,u)=0 \text{ if }  B= \operatorname{curl}A \text{ and } u|_{ \partial \Omega }=0.
\]
In absence of viscosity, $u|_{ \partial \Omega }=0$ does not hold and one uses
\[
c(A,B,u)=0 \text{ if }  B= \operatorname{curl}A \text{ and } u \cdot n|_{ \partial \Omega }= B \cdot n|_{ \partial \Omega }=0.
\]

\section{Spatial variational discretization} \label{sec:spatial}

We will now construct a spatial discretization of~(\ref{velocity0}-\ref{IC}) using finite elements.  
We make use of the following function spaces:
\begin{align*}
H^1_0(\Omega) &= \{f \in L^2(\Omega) \mid \nabla f \in L^2(\Omega)^d, \, f=0 \text{ on } \partial\Omega \}, \\
H_0(\curl,\Omega) &= 
\begin{cases}
\{ u \in L^2(\Omega)^2 \mid \partial_x u_y - \partial_y u_x \in L^2(\Omega), \, u_x n_y - u_y n_x = 0 \text{ on } \partial\Omega \}, &\mbox{ if } d = 2, \\
\{ u \in L^2(\Omega)^3 \mid \curl u \in L^2(\Omega)^3, \, u \times n = 0 \text{ on } \partial\Omega \}, &\mbox{ if } d = 3, \\
\end{cases} \\
H_0(\dv,\Omega) &= \{u \in L^2(\Omega)^d \mid \dv u \in L^2(\Omega), \, u \cdot n = 0 \text{ on } \partial\Omega \}. %\\
%\mathring{H}(\dv,\Omega) &= \{u \in H_0(\dv,\Omega) \mid \dv u = 0\}.
\end{align*}

Let $\mathcal{T}_h$ be a triangulation of $\Omega$.  We regard $\mathcal{T}_h$ as a member of a family of triangulations parametrized by $h = \max_{K \in \mathcal{T}_h} h_K$, where $h_K = \operatorname{diam}K$ denotes the diameter of a simplex $K$.  We assume that this family is shape-regular, meaning that the ratio $\max_{K \in \mathcal{T}_h} h_K/\rho_K$ is bounded above by a positive constant for all $h>0$.  Here, $\rho_K$ denotes the inradius of $K$.

When $r \ge 0$ is an integer and $K$ is a simplex, we write $P_r(K)$ to denote the space of polynomials on $K$ of degree at most $r$.

Let $r,s \ge 0$ be fixed integers.  To discretize the velocity $u$, we use the continuous Galerkin space
\[
U_h^{\grad} = CG_{r+1}(\mathcal{T}_h)^d := \{u \in H^1_0(\Omega)^d \mid \left. u \right|_K \in P_{r+1}(K)^d, \, \forall K \in \mathcal{T}_h\}.
\]
To discretize the magnetic field $B$, we use the Raviart-Thomas space 
\[
U_h^{\dv} = RT_r(\mathcal{T}_h) :=  \{u \in H_0(\dv,\Omega) \mid \left. u \right|_K \in P_r(K)^d + xP_r(K), \, \forall K \in \mathcal{T}_h\}.
\]
To discretize the density $\rho$, we use the discontinuous Galerkin space
\[
F_h = DG_s(\mathcal{T}_h) := \{ f \in L^2(\Omega) \mid \left. f \right|_K \in P_s(K)^d, \, \forall K \in \mathcal{T}_h\}.
\]
Our method will also make use of an auxiliary space, the Nedelec finite element space of the first kind,
%\[
%U_h^{\curl} =  \{u \in H_0(\curl,\Omega) \mid \left. u \right|_K \in P_r(K)^d + x \times P_r(K)^d, \, \forall K \in \mathcal{T}_h\}.
%\]
%\[
%U_h^{\curl} =  NED_r(\mathcal{T}_h) := \{u \in H_0(\curl,\Omega) \mid \left. u \right|_K \in P_{r+1}(K)^d \text{ and }  x \cdot \left. u \right|_K \in P_{r+1}(K) , \, \forall K \in \mathcal{T}_h\},
%\]
\begin{align*}
U_h^{\curl} &\!=  NED_r(\mathcal{T}_h) := \begin{cases}
\{u \in H_0(\curl,\Omega) \mid \left. u \right|_K \in P_r(K)^2 + (x_2,-x_1) P_r(K), \, \forall K \in \mathcal{T}_h\},\!\!&\!\!\mbox{ if } d=2,\\
\{u \in H_0(\curl,\Omega) \mid \left. u \right|_K \in P_r(K)^3 + x \times P_r(K)^3, \, \forall K \in \mathcal{T}_h\}, \!\!&\!\!\mbox{ if } d=3,
\end{cases}
\end{align*}
which satisfies $\curl U_h^{\curl} \subset U_h^{\dv}$.
%\todo{FGB: Is this space strictly larger than the space of Nedelec elements of the first kind considered in \cite{GaGB2021}
%\[
%NED_r(\mathcal{T}_h)= \{ u \in H_0(\curl,\Omega) \mid \left. u \right|_K \in P_r(K)^3 + x \times P_r(K)^3, \, \forall K \in \mathcal{T}_h \}?
%\]
%What we need is still $ \nabla \times U_h^{\curl} \subset U_h^{\dv}$, right?
%}
%\todo{EG:  If I'm not mistaken, they are the same space.  That is,
%\[\begin{split}
% \{ u \in H_0(\curl,\Omega) \mid \left. u \right|_K \in P_r(K)^3 + x \times P_r(K)^3, \, \forall K \in \mathcal{T}_h \} = \\
%\{u \in H_0(\curl,\Omega) \mid \left. u \right|_K \in P_{r+1}(K)^3 \text{ and }  x \cdot \left. u \right|_K \in P_{r+1}(K) , \, \forall K \in \mathcal{T}_h\}.
%\end{split}\]
%See page 331 of [Arnold, Douglas, Richard Falk, and Ragnar Winther. ``Finite element exterior calculus: from Hodge theory to numerical stability.'' Bulletin of the American mathematical society 47.2 (2010): 281-354].  I originally used this definition above since it works in both 2 and 3 dimensions.  But to be be safe, I changed it to the more traditional definition.}

We will need consistent discretizations of the trilinear forms $a,b,c$ and the bilinear forms $d,e$.  To construct these, we introduce some notation.  Let $\mathcal{E}_h$ denote the set of interior $(d-1)$-dimensional faces in $\mathcal{T}_h$. On a face $e = K_1 \cap K_2 \in \mathcal{E}_h$, we denote the jump and average of a piecewise smooth scalar function $f$ by
\[
\llbracket f \rrbracket = f_1 n_1 + f_2 n_2, \quad \{f\} = \frac{f_1+f_2}{2},
\] 
where $f_i = \left. f \right|_{K_i}$, $n_1$ is the normal vector to $e$ pointing from $K_1$ to $K_2$, and similarly for $n_2$.  We let $\pi_h^{\grad} : L^2(\Omega)^3 \rightarrow U_h^{\grad}$, $\pi_h^{\curl} : L^2(\Omega)^3 \rightarrow U_h^{\curl}$, $\pi_h^{\dv} : L^2(\Omega)^3 \rightarrow U_h^{\dv}$, and $\pi_h : L^2(\Omega) \rightarrow F_h$ denote the $L^2$-orthogonal projectors onto $U_h^{\grad}$, $U_h^{\curl}$, $U_h^{\dv}$, and $F_h$, respectively.  We define $\curl_h : U_h^{\dv} \rightarrow U_h^{\curl}$ by
\[
\langle \curl_h u, v \rangle = \langle u, \curl v \rangle, \quad \forall v \in U_h^{\curl}.
\]

We define trilinear forms $b_h : F_h \times F_h \times U_h^{\dv} \rightarrow \mathbb{R}$, $c_h : L^2(\Omega)^d \times U_h^{\dv} \times U_h^{\grad} \rightarrow \mathbb{R}$, and a bilinear form $e_h : U_h^{\dv} \times U_h^{\dv} \rightarrow \mathbb{R}$ by
\begin{align*}
%b_h(f,g,u) &= -\sum_{K \in \mathcal{T}_h} \int_K (u \cdot \nabla \pi_h f) \pi_h g \, {\rm d}x + \sum_{e \in \mathcal{E}_h} \int_e u \cdot \llbracket \pi_h f \rrbracket \{\pi_h g\} \, {\rm d}s, \\
b_h(f,g,u) &= -\sum_{K \in \mathcal{T}_h} \int_K (u \cdot \nabla f) g \, {\rm d}x + \sum_{e \in \mathcal{E}_h} \int_e u \cdot \llbracket f \rrbracket \{g\} \, {\rm d}s, \\
c_h(C,B,v) &= \langle C, \curl \pi_h^{\curl} (\pi_h^{\curl}B \times \pi_h^{\curl}v) \rangle, \\
e_h(B,C) &= -\nu \langle \curl_h B, \curl_h C \rangle.\phantom{\int}
\end{align*}
%Note that $b_h$ is a standard discontinous Galerkin discretization of the scalar advection operator~\cite{brezzi2004discontinuous}. 
Our choice of $c_h$ is motivated in part by the following lemma.
\begin{lemma} \label{lemma:ch_curl}
The trilinear form $c_h$ satisfies
\begin{equation}
c_h(w,u,v) = 0\; \text{ if } \; \curl w = u.
\end{equation}
\end{lemma}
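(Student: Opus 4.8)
The plan is to unwind the definition of $c_h$, integrate by parts to transfer the curl onto the first argument, use the hypothesis $\curl w = u$, and finish with the pointwise orthogonality $a\cdot(a\times b)=0$. Throughout, set
\[
\phi := \pi_h^{\curl}\big(\pi_h^{\curl}u \times \pi_h^{\curl}v\big) \in U_h^{\curl},
\]
so that by definition $c_h(w,u,v) = \langle w, \curl \phi\rangle$, and note that $w \in H(\curl,\Omega)$ because $\curl w = u \in U_h^{\dv} \subset L^2(\Omega)^d$.

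First I would integrate by parts. Since $\phi \in U_h^{\curl} \subset H_0(\curl,\Omega)$, its tangential trace $\phi \times n$ vanishes on $\partial\Omega$. The Green identity for the curl then gives
\[
\langle w, \curl\phi\rangle = \langle \curl w, \phi\rangle - \int_{\partial\Omega}(\phi\times n)\cdot w\,{\rm d}s = \langle \curl w, \phi\rangle,
\]
the boundary term dropping out. The step that I expect to require the most care is making this identity rigorous for a merely $H(\curl,\Omega)$ field $w$: I would justify it either by the standard tangential-trace duality pairing (the trace $\phi\times n$ being zero in the appropriate trace space) or by density of smooth fields in $H(\curl,\Omega)$, which is where all the boundary-regularity bookkeeping lives.

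Having reached $c_h(w,u,v) = \langle \curl w, \phi\rangle = \langle u, \phi\rangle$ via the hypothesis, the remaining step is purely algebraic. Because $\phi$ lies in the range of the $L^2$-orthogonal projector $\pi_h^{\curl}$, and $\pi_h^{\curl}$ is self-adjoint and idempotent, I may replace $u$ by $\pi_h^{\curl}u$ in the pairing and strip the outer projection from $\phi$, obtaining
\[
\langle u, \phi\rangle = \big\langle \pi_h^{\curl}u,\; \pi_h^{\curl}u \times \pi_h^{\curl}v\big\rangle = \int_\Omega \pi_h^{\curl}u \cdot \big(\pi_h^{\curl}u \times \pi_h^{\curl}v\big)\,{\rm d}x.
\]
The integrand has the form $a\cdot(a\times b)$ and hence vanishes pointwise, so $c_h(w,u,v)=0$, as claimed. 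In the planar case $d=2$ the same chain of identities applies with the scalar cross and curl operators, the final cancellation again following from the corresponding antisymmetry.
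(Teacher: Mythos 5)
Your proposal is correct and follows essentially the same route as the paper's proof: integrate by parts using the vanishing tangential trace of $\pi_h^{\curl}(\pi_h^{\curl}u\times\pi_h^{\curl}v)$, substitute $\curl w = u$, use the self-adjointness of $\pi_h^{\curl}$ to move the projection onto $u$, and conclude from the pointwise identity $a\cdot(a\times b)=0$. The only difference is that you flag the trace-regularity justification explicitly, which the paper leaves implicit.
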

\begin{proof}
If $\curl w = u$, then we can integrate $c_h$ by parts and use the fact that $\left.n \times \pi_h^{\curl} (\pi_h^{\curl}u \times \pi_h^{\curl}v)\right|_{\partial\Omega} = 0$ to obtain
\begin{align*}
c_h(w,u,v)
&= \langle w, \curl \pi_h^{\curl} (\pi_h^{\curl}u \times \pi_h^{\curl}v) \rangle & \\
&= \langle \curl w,  \pi_h^{\curl} (\pi_h^{\curl}u \times \pi_h^{\curl}v) \rangle &  \\
&= \langle u,  \pi_h^{\curl} (\pi_h^{\curl}u \times \pi_h^{\curl}v) \rangle & \\
&= \langle \pi_h^{\curl} u,  \pi_h^{\curl}u \times \pi_h^{\curl}v \rangle & \\
&= 0. &
\end{align*}
\end{proof}

\medskip 

In the spatially discrete, temporally continuous setting, our method seeks $u : [0,T] \rightarrow U_h^{\grad}$, $\rho : [0,T] \rightarrow F_h$, and $B : [0,T] \rightarrow U_h^{\dv}$ such that
\begin{align}
\langle \sigma, \partial_t \rho \rangle &= -b_h(\sigma,\rho,u), && \forall \sigma \in F_h, \label{rhodoth} \\
\langle C, \partial_t B \rangle &= -c_h(C,B,u), && \forall C \in U_h^{\dv}, \label{Bdoth}
\end{align}
and
\[
\delta \int_0^T \ell(u,\rho,B) \, {\rm d}t = 0
\]
for all variations $\delta u : [0,T] \rightarrow U_h^{\grad}$, $\delta \rho : [0,T] \rightarrow F_h$, and $\delta B : [0,T] \rightarrow U_h^{\dv}$ satisfying
\begin{align}
\langle w, \delta u \rangle &= \langle w, \partial_t v \rangle - a(w,u,v), && \forall w \in U_h^{\grad}, \label{deltauh} \\
\langle \sigma, \delta \rho \rangle &= -b_h(\sigma,\rho,u), && \forall \sigma \in F_h, \label{deltarhoh} \\
\langle C, \delta B \rangle &= -c_h(C,B,u), && \forall C \in U_h^{\dv}, \label{deltaBh}
\end{align}
where $v : [0,T] \rightarrow U_h^{\grad}$ is an arbitrary vector field satisfying $v(0)=v(T)=0$.

Note that~(\ref{rhodoth}-\ref{Bdoth}) and~(\ref{deltauh}-\ref{deltaBh}) are discrete counterparts of the advection laws
\[
\partial_t \rho = -\dv(\rho u), \quad \partial_t B = \curl (u \times B)
\]
and the constraints
\[
\delta u = \partial_t v + [u,v], \quad \delta \rho = -\dv(\rho v), \quad \delta B = \curl(v \times B)
\]
on the variations.

%\todo{FGB: Comment: While the above variational setting is mathematically correct and does give our scheme, it is not induced from the Hamilton principle on the discrete diffeomorphism group $G_h$ when we have $B$. In absence of $B$ it is ok since the advection for $ \rho  $ does emerge from a Lie group action of $G_h$ on $V_h$ as we show in \cite{GaGB2020}, exactly as in the continuous case. The choice of $c_h$ is however not coming from a Lie group action of $G_h$ (or, at least I wouldn't know how to show this). We can add the following comment, what do you think? (if it is clear enough and not too confusing).}
%\todo{EG: Looks good, I changed $U^{\grad}$ to $U_h^{\grad}$.}
As shown in \cite{GaGB2020}, in the absence of $B$ this variational principle follows from the Hamilton principle on a discrete diffeomorphism group $G_h \subset GL(F_h)$ by applying Euler-Poincar\'e reduction. In particular, the discrete version of $a$ emerging from the Euler-Poincar\'e variational formulation in \cite{GaGB2020} coincides with $a$ on the finite element space $U_h^{\grad}$ used here for the velocity.

The variational principle above yields the following equations for $u \in U_h^{\grad}$, $\rho \in F_h$, $B \in U_h^{\dv}$:
\begin{align}
\left\langle \partial_t \frac{\delta \ell}{\delta u}, v \right\rangle + a\left(\pi_h^{\grad} \frac{\delta \ell}{\delta u}, u, v\right)  + b_h\left( \pi_h \frac{\delta \ell}{\delta \rho},\rho,v \right) + c_h\left(\pi_h^{\dv} \frac{\delta \ell}{\delta B},B,v\right)&= 0, && \forall v \in U_h^{\grad}, \label{velocity_nores_NS_h} \\
\langle \partial_t \rho, \sigma \rangle + b_h(\sigma,\rho,u) &=0, && \forall \sigma \in F_h,\label{density_nores_NS_h} \\
\langle \partial_t B, C \rangle + c_h(C,B,u) &= 0, && \forall C \in U_h^{\dv}. \label{magnetic_nores_NS_h}
\end{align}
%\todo{EG: Note:
%\begin{enumerate}
%\item Instead of introducing a discrete Lagrangian $\ell_d$, I left everything in terms of $\ell$, so the projections of $\frac{\delta \ell}{\delta \rho}$, etc. appear explicitly.
%\item I wasn't able to get the Lagrange-d'Alembert approach to work in the discrete setting.
%\end{enumerate}}
We introduce viscosity and resistivity by adding $d(u,v)$ and $e_h(B,C)$ to the right-hand sides of~(\ref{velocity_nores_NS_h}) and~(\ref{magnetic_nores_NS_h}).  The resulting equations read
\begin{align}
\hspace{-0.5cm}\left\langle \partial_t \frac{\delta \ell}{\delta u}, v \right\rangle + a\left(\pi_h^{\grad} \frac{\delta \ell}{\delta u}, u, v\right)  + b_h\left( \pi_h \frac{\delta \ell}{\delta \rho},\rho,v \right) + c_h\left(\pi_h^{\dv} \frac{\delta \ell}{\delta B},B,v\right)&= d(u,v), \!\!\!&\!\!&\!\!\! \forall v \in U_h^{\grad}\!\!, \label{velocity_res_NS_h} \\
\langle \partial_t \rho, \sigma \rangle + b_h(\sigma,\rho,u) &=0, &\!\!&\!\!\!\forall \sigma \in F_h,\label{density_res_NS_h} \\
\langle \partial_t B, C \rangle + c_h(C,B,u) &= e_h(B,C), &\!\!& \!\!\!\forall C \in U_h^{\dv}. \label{magnetic_res_NS_h}
\end{align}
These equations are not implementable in their present form, since the terms involving $c_h$ and $e_h$ contain projections of the test functions $v$ and $C$.  To handle these terms, we use the following lemma.

%todo{EG: Do you prefer to leave $\ell$ unspecified and write everything in terms of $\frac{\delta\ell}{\delta u}$, $\frac{\delta\ell}{\delta \rho}$, etc.?  Also, should we use $\curl$ rather than $\nabla \times$ everywhere for consistency?}

\begin{lemma} \label{lemma:rectify}
Let $u,B \in U_h^{\dv}$ be arbitrary, and let $J,H,U,E,\alpha,j \in U_h^{\curl}$ be defined by the relations
\begin{align}
\langle J, K \rangle &= -\left\langle \frac{\delta \ell}{\delta B}, \curl K \right\rangle, && \forall K \in U_h^{\curl}, \\
\langle H, G \rangle &= \langle B, G \rangle, && \forall G \in U_h^{\curl}, \\
\langle U, V \rangle &= \langle u, V \rangle, && \forall V \in U_h^{\curl}, \\
\langle E, F \rangle &= -\langle U \times H, F \rangle, && \forall F \in U_h^{\curl}, \label{Edef} \\
\langle \alpha, \beta \rangle &= -\langle J \times H, \beta \rangle, && \forall \beta \in U_h^{\curl}, \\
\langle j, k \rangle &= \left\langle B, \curl k \right\rangle, && \forall k \in U_h^{\curl}.
\end{align}
Then, for every $C \in U_h^{\dv}$ and every $v \in U_h^{\grad}$, we have
\begin{align}
c_h(C,B,u) &= \langle \curl E, C \rangle, \\
c_h\left(\pi_h^{\dv} \frac{\delta \ell}{\delta B},B,v\right) &= \langle \alpha, v \rangle, \label{ch_identity2} \\
e_h(B,C) &= -\nu\langle \curl j, C \rangle. \label{eh_identity}
\end{align}
\end{lemma}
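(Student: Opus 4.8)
The plan is to verify the three identities one at a time; in each case I would unfold the definition of the relevant form and then rely on just three ingredients: the defining relations of the auxiliary fields, the self-adjointness of the $L^2$-projectors (which in particular act as the identity on their own target spaces), and the inclusion $\curl U_h^{\curl} \subset U_h^{\dv}$. Starting with $c_h(C,B,u) = \langle \curl E, C \rangle$, the defining relations for $U$ and $H$ say precisely that $U = \pi_h^{\curl} u$ and $H = \pi_h^{\curl} B$, so the relation defining $E$ becomes $E = -\pi_h^{\curl}(\pi_h^{\curl} u \times \pi_h^{\curl} B)$. By antisymmetry of the cross product this is $E = \pi_h^{\curl}(\pi_h^{\curl} B \times \pi_h^{\curl} u)$, so $\curl E$ is exactly the vector appearing inside $c_h(C,B,u) = \langle C, \curl \pi_h^{\curl}(\pi_h^{\curl} B \times \pi_h^{\curl} u) \rangle$, and the identity is immediate.

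For the identity involving $\alpha$, I would set $C = \pi_h^{\dv} \frac{\delta \ell}{\delta B}$ and write $K = \pi_h^{\curl}(\pi_h^{\curl} B \times \pi_h^{\curl} v) \in U_h^{\curl}$, so that $c_h(C,B,v) = \langle \pi_h^{\dv} \frac{\delta \ell}{\delta B}, \curl K \rangle$. The crucial step is that $\curl K \in \curl U_h^{\curl} \subset U_h^{\dv}$, so $\pi_h^{\dv}$ can be dropped by self-adjointness, leaving $\langle \frac{\delta \ell}{\delta B}, \curl K \rangle = -\langle J, K \rangle$ by the definition of $J$. Since $J \in U_h^{\curl}$, the outer $\pi_h^{\curl}$ inside $K$ is absorbed to give $-\langle J, \pi_h^{\curl} B \times \pi_h^{\curl} v \rangle$; the scalar triple-product identity $\langle J, H \times w \rangle = \langle J \times H, w \rangle$, with $H = \pi_h^{\curl} B$ and $w = \pi_h^{\curl} v$, then turns this into $-\langle J \times H, \pi_h^{\curl} v \rangle = \langle \alpha, \pi_h^{\curl} v \rangle$ by the definition of $\alpha$. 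A last use of self-adjointness of $\pi_h^{\curl}$ together with $\alpha \in U_h^{\curl}$ yields $\langle \alpha, v \rangle$, as claimed.

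The third identity is the most direct: applying the defining relation of $\curl_h$ with test function $\curl_h B \in U_h^{\curl}$ and argument $C \in U_h^{\dv}$ gives $\langle \curl_h C, \curl_h B \rangle = \langle C, \curl \curl_h B \rangle$, and since $j = \curl_h B$ and the $L^2$ inner product is symmetric, $e_h(B,C) = -\nu \langle \curl_h B, \curl_h C \rangle = -\nu \langle \curl j, C \rangle$. I expect the only real obstacle to be bookkeeping: keeping track of exactly when each projector may be inserted, removed, or absorbed. The two facts that make everything go through are that each $L^2$-projector is self-adjoint and restricts to the identity on its target space, and that $\curl$ maps $U_h^{\curl}$ into $U_h^{\dv}$ — the latter being precisely what licenses dropping $\pi_h^{\dv}$ in the $\alpha$ computation. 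The other place to be careful is the signs in the $E$ and $\alpha$ steps, which come from cross-product antisymmetry and the triple-product rearrangement.
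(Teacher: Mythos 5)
Your proof is correct and follows essentially the same route as the paper's: identify $U=\pi_h^{\curl}u$, $H=\pi_h^{\curl}B$, unfold the defining relations, and use self-adjointness of the $L^2$-projectors together with $\curl U_h^{\curl}\subset U_h^{\dv}$ and the triple-product identity. The only differences are cosmetic — you run the $\alpha$ computation in the reverse direction and make the cross-product antisymmetry explicit where the paper leaves it implicit.
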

\begin{proof}
We have $H=\pi_h^{\curl} B$ and $U = \pi_h^{\curl} u$ by definition.   Thus,~(\ref{Edef}) implies that
\[
E = -\pi_h^{\curl}(U \times H) = -\pi_h^{\curl} (\pi_h^{\curl} u \times \pi_h^{\curl} B).
\]
It follows that
\[
\langle \curl E, C \rangle = -\langle \curl \pi_h^{\curl} (\pi_h^{\curl} u \times \pi_h^{\curl} B), C \rangle = c_h(C,B,u). 
\] 
To prove~(\ref{ch_identity2}), we use the fact that $\curl U_h^{\curl} \subset U_h^{\dv}$ to write
\begin{align*}
\langle \alpha, v \rangle
&= \langle \alpha, \pi_h^{\curl} v \rangle \\
&= -\langle J \times \pi_h^{\curl} B, \pi_h^{\curl} v \rangle \\
&= -\langle J, \pi_h^{\curl} B \times \pi_h^{\curl} v \rangle \\
&= -\langle J, \pi_h^{\curl} (\pi_h^{\curl} B \times \pi_h^{\curl} v) \rangle \\
&= \left\langle \frac{\delta\ell}{\delta B}, \curl  \pi_h^{\curl} (\pi_h^{\curl}B \times \pi_h^{\curl}v) \right\rangle \\
&= \left\langle \pi_h^{\dv} \frac{\delta\ell}{\delta B}, \curl \pi_h^{\curl} (\pi_h^{\curl}B \times \pi_h^{\curl}v) \right\rangle \\
&= c_h\left( \pi_h^{\dv} \frac{\delta\ell}{\delta B},B,v\right).
\end{align*}
Finally,~(\ref{eh_identity}) follows from the fact that $j = \curl_h B$ by definition, so
\[
-\nu\langle \curl j, C \rangle = -\nu \langle j, \curl_h C \rangle  = e_h(B,C).
\]
\end{proof} 

\medskip 

The preceding lemma shows that~(\ref{velocity_res_NS_h}-\ref{magnetic_res_NS_h}) can be rewritten in the following equivalent way.  We seek $u,w \in U_h^{\grad}$, $B \in U_h^{\dv}$, $\rho,\theta \in F_h$, and $J,H,U,E,\alpha,j \in U_h^{\curl}$ such that
\begin{align}
\left\langle \partial_t \frac{\delta \ell}{\delta u}, v \right\rangle + a\left(w, u, v\right) + b_h(\theta,\rho,v) + \langle \alpha, v \rangle &= d(u,v), && \forall v \in U_h^{\grad}, \label{velocityh_comp} \\
\langle \partial_t \rho, \sigma \rangle + b_h(\sigma,\rho,u) &= 0, && \forall \sigma \in F_h, \label{densityh_comp} \\
\langle \partial_t B, C \rangle + \langle \curl E, C \rangle &= -\nu \langle \curl j, C \rangle, && \forall C \in U_h^{\dv}, \label{magnetich_comp} \\
\langle w, z \rangle &= \left\langle \frac{\delta \ell}{\delta u}, z \right\rangle, && \forall z \in U_h^{\grad}, \label{wh_comp} \\
\langle \theta, \tau \rangle &= \left\langle \frac{\delta \ell}{\delta \rho}, \tau \right\rangle, && \forall \tau \in F_h, \label{thetah_comp} \\
\langle J, K \rangle &= -\left\langle \frac{\delta \ell}{\delta B}, \curl K \right\rangle, && \forall K \in U_h^{\curl}, \\
\langle H, G \rangle &= \langle B, G \rangle, && \forall G \in U_h^{\curl}, \label{Bproj_comp} \\
\langle U, V \rangle &= \langle u, V \rangle, && \forall V \in U_h^{\curl}, \label{uproj_comp}  \\
\langle E, F \rangle &= -\langle U \times H, F \rangle, && \forall F \in U_h^{\curl}, \label{Eh_comp} \\
\langle \alpha, \beta \rangle &= -\langle J \times H, \beta \rangle, && \forall \beta \in U_h^{\curl}, \label{alphah_comp} \\
\langle j, k \rangle &= \left\langle B, \curl k \right\rangle, && \forall k \in U_h^{\curl}. \label{jh_comp}
\end{align}

\begin{remark} \label{remark:jJ}
For Lagrangians that satisfy $\frac{\delta \ell}{\delta B} = -B$, we have $j=J$, so~(\ref{jh_comp}) can be omitted.
\end{remark}

\begin{remark}
The above discretization has several commonalities with the one proposed in~\cite{HuXu2019} for a stationary MHD problem.  The finite element spaces we use for $u$ and $B$ match the ones used there, and our discretization of the term $-\nu \langle \curl B, \curl C\rangle$ matches the one used in Equation 4.3(b-c) of~\cite{HuXu2019}.
\end{remark}

\begin{remark} \label{remark:dingma} The above discretization also has a few commonalities with one that appears in~\cite{DiMa2020}, where a stable finite element method for compressible MHD is proposed and proved to be convergent.  There, the space $CG_2(\mathcal{T}_h)^d$ is used for $u$ and $DG_0(\mathcal{T}_h)$ is used for $\rho$.  These choices coincide with ours when $r=1$ and $s=0$.  However, the authors of~\cite{DiMa2020} use $NED_0(\mathcal{T}_h)$ rather than $RT_r(\mathcal{T}_h)$ for $B$ and treat the boundary condition $\left. B \times n \right|_{\partial\Omega}=0$ rather than $\left. B \cdot n\right|_{\partial\Omega} = \left. \curl B \times n \right|_{\partial\Omega} = 0$.
\end{remark}

\begin{proposition} \label{prop:invariantsh}
If $B(0)$ is exactly divergence-free, then the solution to~(\ref{velocityh_comp}-\ref{jh_comp}) satisfies
\begin{align}
\dv B(t) &\equiv 0, \\
\frac{d}{dt} \int_\Omega \rho \, {\rm d}x &= 0, \\
\frac{d}{dt} \mathcal{E} &= d(u,u) - e_h\left(B, \pi_h^{\dv} \frac{\delta \ell}{\delta B} \right), \label{energyh} \\
\frac{d}{dt} \int_\Omega A \cdot B \, {\rm d}x &= 2e_h(B,\pi_h^{\dv} A), \label{magnetichelicityh}
\end{align}
for all $t$.  Here, $\mathcal{E} = \langle \frac{\delta\ell}{\delta u}, u \rangle - \ell(u,\rho,B)$ denotes the total energy of the system, and $A$ denotes any vector field satisfying $\nabla \times A = B$ and $\left.A \times n \right|_{\partial\Omega} = 0$.
\end{proposition}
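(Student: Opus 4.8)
The plan is to establish the four assertions in turn, using the reformulation of the equations in Lemma~\ref{lemma:rectify} together with the cancellation property of Lemma~\ref{lemma:ch_curl}, and mimicking at the discrete level the continuous balance-law computations recorded earlier.

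First I would treat $\dv B \equiv 0$ and mass conservation, which are the quickest. Rewriting \eqref{magnetich_comp} as $\langle \partial_t B + \curl E + \nu \curl j, C \rangle = 0$ for all $C \in U_h^{\dv}$, I observe that since $\curl U_h^{\curl} \subset U_h^{\dv}$ all three summands lie in $U_h^{\dv}$; hence the bracketed quantity is an element of $U_h^{\dv}$ orthogonal to all of $U_h^{\dv}$, so it vanishes and $\partial_t B = -\curl(E + \nu j)$ holds pointwise. Being an exact curl, $\partial_t B$ is divergence-free, so $\frac{d}{dt}\dv B = 0$ and $\dv B(t) = \dv B(0) \equiv 0$. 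For mass, I take $\sigma = 1 \in F_h$ in \eqref{densityh_comp}: then $\frac{d}{dt}\int_\Omega \rho\,{\rm d}x = \langle \partial_t \rho, 1\rangle = -b_h(1,\rho,u)$, and $b_h(1,\rho,u)=0$ because $\nabla 1 = 0$ and $\llbracket 1 \rrbracket = n_1 + n_2 = 0$, so both terms in the definition of $b_h$ drop out.

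For the energy balance \eqref{energyh} I would differentiate $\mathcal{E} = \langle \frac{\delta\ell}{\delta u}, u\rangle - \ell(u,\rho,B)$, cancel the $\langle \frac{\delta\ell}{\delta u}, \partial_t u\rangle$ terms, and obtain $\frac{d}{dt}\mathcal{E} = \langle \partial_t \frac{\delta\ell}{\delta u}, u\rangle - \langle \frac{\delta\ell}{\delta \rho}, \partial_t \rho\rangle - \langle \frac{\delta\ell}{\delta B}, \partial_t B\rangle$. Then I would feed the right test functions into the equations of motion: $v = u$ in \eqref{velocityh_comp}, using antisymmetry $a(w,u,u)=0$ and \eqref{ch_identity2}; $\sigma = \theta$ in \eqref{densityh_comp}, using that $\theta = \pi_h \frac{\delta\ell}{\delta\rho}$ and $\partial_t\rho \in F_h$ so that $\langle \frac{\delta\ell}{\delta\rho},\partial_t\rho\rangle = \langle\theta,\partial_t\rho\rangle$; and $C = \pi_h^{\dv}\frac{\delta\ell}{\delta B}$ in \eqref{magnetich_comp}, using $\partial_t B \in U_h^{\dv}$ together with the identities $\langle \curl E, C\rangle = c_h(C,B,u)$ and \eqref{eh_identity} from Lemma~\ref{lemma:rectify}. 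The terms $b_h(\theta,\rho,u)$ and $c_h(\pi_h^{\dv}\frac{\delta\ell}{\delta B},B,u)$ then cancel pairwise, leaving exactly $d(u,u) - e_h(B,\pi_h^{\dv}\frac{\delta\ell}{\delta B})$.

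For magnetic helicity \eqref{magnetichelicityh} I would reproduce the continuous argument: writing $B = \curl A$ and using the integration-by-parts identity $\int_\Omega(\curl F\cdot G - F\cdot \curl G) = \int_{\partial\Omega}(F\times G)\cdot n$, the boundary term vanishes because $A\times n|_{\partial\Omega}=0$, giving $\langle \partial_t A, B\rangle = \langle A, \partial_t B\rangle$ and hence $\frac{d}{dt}\int_\Omega A\cdot B\,{\rm d}x = 2\langle \partial_t B, A\rangle = 2\langle \partial_t B, \pi_h^{\dv}A\rangle$, the last step because $\partial_t B \in U_h^{\dv}$. Inserting $C = \pi_h^{\dv}A$ into \eqref{magnetich_comp} and invoking Lemma~\ref{lemma:rectify} yields $\langle \partial_t B, \pi_h^{\dv}A\rangle = -c_h(\pi_h^{\dv}A,B,u) + e_h(B,\pi_h^{\dv}A)$, so everything reduces to showing $c_h(\pi_h^{\dv}A,B,u)=0$. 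I expect this to be the main obstacle. One cannot apply Lemma~\ref{lemma:ch_curl} directly to the argument $\pi_h^{\dv}A$, since $\pi_h^{\dv}A \in RT_r$ need not belong to $H(\curl)$ and its curl need not equal $B$. The fix is to notice that the factor $c_h$ pairs against its first argument, namely $\curl \pi_h^{\curl}(\pi_h^{\curl}B \times \pi_h^{\curl}u)$, lies in $U_h^{\dv}$, so the $L^2$ projection $\pi_h^{\dv}$ may be removed: $c_h(\pi_h^{\dv}A,B,u) = c_h(A,B,u)$. Now Lemma~\ref{lemma:ch_curl} applies with $w=A$, since $\curl A = B$ and $A \in H(\curl)$, giving $c_h(A,B,u)=0$ and hence the claimed balance.
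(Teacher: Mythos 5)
Your proposal is correct and follows essentially the same route as the paper's proof: pointwise identification of $\partial_t B$ as a curl for $\dv B=0$, the test function $\sigma=1$ for mass, the test-function choices $v=u$, $\sigma=\pm\pi_h\frac{\delta\ell}{\delta\rho}$, $C=\pm\pi_h^{\dv}\frac{\delta\ell}{\delta B}$ with removal of projections for the energy balance, and for magnetic helicity the same key step of dropping $\pi_h^{\dv}$ from $c_h(\pi_h^{\dv}A,B,u)$ using $\curl U_h^{\curl}\subset U_h^{\dv}$ before invoking Lemma~\ref{lemma:ch_curl}. No gaps.
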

\begin{proof}
Since $\curl U_h^{\curl} \subset U_h^{\dv}$, the magnetic field equation~(\ref{magnetich_comp}) implies that the relation
\[
\partial_t B + \curl E = -\nu \curl j
\]
holds pointwise.  Taking the divergence of both sides shows that $\partial_t \dv B = 0$, so $B(t)$ is divergence-free for all $t$.

Taking $\sigma=1$ in the density equation~(\ref{densityh_comp}) shows that
\[
\frac{d}{dt} \int_\Omega \rho \, {\rm d}x  = \langle \partial_t \rho, 1 \rangle = -b_h(1,\rho,u) = 0.
\]

To compute the rate of change of the energy, we take $v=u$ in~(\ref{velocity_res_NS_h}), $\sigma=-\pi_h \frac{\delta\ell}{\delta \rho}$ in~(\ref{density_res_NS_h}), and $C=-\pi_h^{\dv} \frac{\delta\ell}{\delta B}$ in~(\ref{magnetic_res_NS_h}).  Adding the three equations yields
\begin{equation*}
\left\langle \partial_t \frac{\delta \ell}{\delta u}, u \right\rangle - \left\langle \partial_t \rho, \pi_h \frac{\delta\ell}{\delta \rho} \right\rangle - \left\langle \partial_t B, \pi_h^{\dv} \frac{\delta \ell}{\delta B} \right\rangle = d(u,u) - e_h\left(B, \pi_h^{\dv} \frac{\delta \ell}{\delta B} \right).
\end{equation*}
Since $\partial_t \rho \in F_h$ and $\partial_t B \in U_h^{\dv}$, this simplifies to 
\begin{equation*}
\left\langle \partial_t \frac{\delta \ell}{\delta u}, u \right\rangle - \left\langle \partial_t \rho,  \frac{\delta\ell}{\delta \rho} \right\rangle - \left\langle \partial_t B, \frac{\delta \ell}{\delta B} \right\rangle = d(u,u) - e_h\left(B, \pi_h^{\dv} \frac{\delta \ell}{\delta B} \right),
\end{equation*}
which is equivalent to
\[
\frac{d}{dt} \left( \left\langle \frac{\delta \ell}{\delta u}, u \right\rangle - \ell(u,\rho,B) \right) = d(u,u) - e_h\left(B, \pi_h^{\dv} \frac{\delta \ell}{\delta B} \right).
\]

For the magnetic helicity, we compute
\begin{align*} 
\frac{d}{dt} \int_ \Omega A \cdot B \, {\rm d}x &= \left\langle \partial _t A, B \right\rangle + \left\langle A, \partial _t B \right\rangle \\
&=  \left\langle \partial _t A, \curl  A \right\rangle + \left\langle A,  \partial _t B \right\rangle\\
&=  \left\langle \curl \partial _t A,   A \right\rangle + \left\langle A,  \partial _t B \right\rangle\\
&= 2 \left\langle \partial _t B, A \right\rangle \\
&= 2 \langle \partial _t B, \pi_h^{\dv} A \rangle \\
&= - 2 c_h(\pi_h^{\dv} A,B,u)+ 2e_h(B,\pi_h^{\dv} A).
\end{align*} 
Since $\curl U_h^{\curl} \subset U_h^{\dv}$, we have
\begin{align*}
c_h(\pi_h^{\dv} A,B,u) 
&= \langle \pi_h^{\dv} A, \curl \pi_h^{\curl}(\pi_h^{\curl}B \times \pi_h^{\curl} u) \rangle \\
&= \langle A, \curl \pi_h^{\curl}(\pi_h^{\curl}B \times \pi_h^{\curl} u) \rangle \\
&= c_h(A,B,u).
\end{align*}
Thus,
\[
\frac{d}{dt} \int_ \Omega A \cdot B \, {\rm d}x = - 2 c_h(A,B,u)+ 2e_h(B, \pi_h^{\dv} A).
\]
The first term vanishes by Lemma~\ref{lemma:ch_curl}, yielding~(\ref{magnetichelicityh}).
\end{proof}

\begin{remark}
The proposition above continues to hold if we omit the projection of $\frac{\delta \ell}{\delta u}$ onto $U_h^{\grad}$ in~(\ref{wh_comp}).  We find it advantageous to do this for efficiency.  As an illustration, let us consider the setting where $\ell(u,\rho,B) = \int_\Omega [\frac{1}{2}\rho|u|^2 - \epsilon(\rho) - \frac{1}{2}|B|^2] \, {\rm d}x$.  If we omit~(\ref{wh_comp}) and invoke  Remark~\ref{remark:jJ}, then the method seeks $u \in U_h^{\grad}$, $B \in U_h^{\dv}$, $\rho,\theta \in F_h$, and $J,H,U,E,\alpha \in U_h^{\curl}$ such that
\begin{align}
\left\langle \partial_t (\rho u), v \right\rangle + a\left(\rho u, u, v\right) + b_h(\theta,\rho,v) + \langle \alpha, v \rangle &= d(u,v), && \forall v \in U_h^{\grad}, \label{velocityh_mhd} \\
\langle \partial_t \rho, \sigma \rangle + b_h(\sigma,\rho,u) &= 0, && \forall \sigma \in F_h, \label{densityh_mhd} \\
\langle \partial_t B, C \rangle + \langle \curl E, C \rangle &= -\nu \langle \curl J, C \rangle, && \forall C \in U_h^{\dv}, \label{magnetich_mhd} \\
\langle \theta, \tau \rangle &= \left\langle \frac{1}{2}|u|^2 - \frac{\partial \epsilon}{\partial \rho}, \tau \right\rangle, && \forall \tau \in F_h, \label{thetah_mhd} \\
\langle J, K \rangle &= \left\langle B, \curl K \right\rangle, && \forall K \in U_h^{\curl}, \\
\langle H, G \rangle &= \langle B, G \rangle, && \forall G \in U_h^{\curl}, \label{Bproj_mhd} \\
\langle U, V \rangle &= \langle u, V \rangle, && \forall V \in U_h^{\curl}, \label{uproj_mhd}  \\
\langle E, F \rangle &= -\langle U \times H, F \rangle, && \forall F \in U_h^{\curl}, \label{Eh_mhd} \\
\langle \alpha, \beta \rangle &= -\langle J \times H, \beta \rangle, && \forall \beta \in U_h^{\curl}. \label{alphah_mhd}
\end{align}
In this setting, the energy identity~(\ref{energyh}) becomes
\[
\frac{d}{dt} \int_\Omega \Big[\frac{1}{2} \rho |u|^2 + \epsilon(\rho) + \frac{1}{2}  |B|^2 \Big]\, {\rm d}x = d(u,u) + e_h(B,B).
\]
\end{remark}

\medskip 

\section{Temporal discretization}

In this section, we design a temporal discretization of~(\ref{velocityh_mhd}-\ref{alphah_mhd}) for Lagrangians of the form
\[
\ell(u,\rho,B) = \int_\Omega \Big[\frac{1}{2}\rho|u|^2 - \epsilon(\rho) - \frac{1}{2}|B|^2\Big] \, {\rm d}x.
\]
Our temporal discretization will retain all of the structure-preserving properties of our spatial discretization: energy balance, magnetic helicity balance, total mass conservation, and $\dv B = 0$.

We adopt the following notation.  For a fixed time step $\Delta t > 0$, we denote $t_k = k\Delta t$.  The value of the approximate solution $u \in U_h^{\grad}$ at time $t_k$ is denoted $u_k$, and likewise for $\rho$ and $B$.  The auxiliary variables $\theta \in F_h$ and $J,H,U,E,\alpha \in U_h^{\curl}$ will play a role in our calculations, but we do not index them with a subscript $k$.  We write $u_{k+1/2} = \frac{u_k+u_{k+1}}{2}$, $\rho_{k+1/2} = \frac{\rho_k+\rho_{k+1}}{2}$, $B_{k+1/2} = \frac{B_k+B_{k+1}}{2}$, and $(\rho u)_{k+1/2} =  \frac{\rho_k u_k+\rho_{k+1} u_{k+1}}{2}$.      We will also make use of the bivariate function
\[
\delta(x,y) = \frac{\epsilon(y)-\epsilon(x)}{y-x}.
\]

Given $u_k, \rho_k, B_k$, our method steps from time $t_k$ to $t_{k+1}$ by solving
\begin{align}
\left\langle \frac{\rho_{k+1} u_{k+1} - \rho_k u_k}{\Delta t}, v \right\rangle + a\left((\rho u)_{k+1/2}, u_{k+1/2}, v\right) &&\nonumber\\+\, b_h(\theta,\rho_{k+1/2},v) + \langle \alpha, v \rangle &= d(u_{k+1/2},v), && \forall v \in U_h^{\grad}, \label{velocityh_dt} \\
\left\langle \frac{\rho_{k+1}-\rho_k}{\Delta t}, \sigma \right\rangle + b_h(\sigma,\rho_{k+1/2},u_{k+1/2}) &= 0, && \forall \sigma \in F_h, \label{densityh_dt} \\
\left\langle \frac{B_{k+1}-B_k}{\Delta t}, C \right\rangle + \langle \curl E, C \rangle &= -\nu \langle \curl J, C \rangle, && \forall C \in U_h^{\dv}, \label{magnetich_dt}
\end{align}
for $u_{k+1},\rho_{k+1},B_{k+1}$.  Here, $\theta$, $\alpha$, $E$, and $J$ (as well as $H$ and $U$) are defined by
\begin{align}
\langle \theta, \tau \rangle &= \left\langle \frac{1}{2}u_k \cdot u_{k+1} - \delta(\rho_k,\rho_{k+1}), \tau \right\rangle, && \forall \tau \in F_h, \label{thetah_dt} \\
\langle J, K \rangle &= \left\langle B_{k+1/2}, \curl K \right\rangle, && \forall K \in U_h^{\curl}, \label{Jh_dt} \\
\langle H, G \rangle &= \langle B_{k+1/2}, G \rangle, && \forall G \in U_h^{\curl}, \label{Bproj_dt} \\
\langle U, V \rangle &= \langle u_{k+1/2}, V \rangle, && \forall V \in U_h^{\curl}, \label{uproj_dt}  \\
\langle E, F \rangle &= -\langle U \times H, F \rangle, && \forall F \in U_h^{\curl}, \label{Eh_dt} \\
\langle \alpha, \beta \rangle &= -\langle J \times H, \beta \rangle, && \forall \beta \in U_h^{\curl}. \label{alphah_dt}
\end{align}

Notice that we used the midpoint rule everywhere above except in the definition of $\theta$, where we used 
\[
\frac{1}{2}u_k \cdot u_{k+1} - \delta(\rho_k,\rho_{k+1})
\]
instead of  
\[
\frac{1}{2} |u_{k+1/2}|^2 - \left.\frac{\partial\epsilon}{\partial\rho}\right|_{\rho=\rho_{k+1/2}}
\]
to discretize $\frac{1}{2} |u|^2 -\frac{\partial\epsilon}{\partial\rho}$.  This will allow us to take advantage of the identity
\begin{equation} \label{rhousquared}
\begin{split}
&\frac{1}{\Delta t} \int_\Omega \Big[ \frac{1}{2}\rho_{k+1} |u_{k+1}|^2 + \epsilon(\rho_{k+1}) - \frac{1}{2} \rho_k |u_k|^2 - \epsilon(\rho_k)\Big]  \, {\rm d}x \\
&= \left\langle \frac{\rho_{k+1} u_{k+1} - \rho_k u_k}{ \Delta t }, \frac{u_k+u_{k+1}}{2} \right\rangle - \left\langle \frac{\rho_{k+1}-\rho_k}{\Delta t}, \frac{1}{2} u_k \cdot u_{k+1} - \delta(\rho_k,\rho_{k+1}) \right\rangle
\end{split}
\end{equation}
when we prove energy conservation below.

\begin{proposition} \label{prop:invariantsh_dt}
If $B_0$ is exactly divergence-free, then the solution to~(\ref{velocityh_dt}-\ref{alphah_dt}) satisfies
\begin{align}
\dv B_k &\equiv 0, \label{divBh_dt} \\
%\frac{1}{\Delta t}\left( \int_\Omega \rho_{k+1} \, {\rm d}x - \int_\Omega \rho_k \, {\rm d}x \right) &= 0, \\
\int_\Omega \rho_{k+1} \, {\rm d}x &= \int_\Omega \rho_k \, {\rm d}x, \label{massh_dt} \\
\frac{\mathcal{E}_{k+1}-\mathcal{E}_k}{\Delta t} &= d(u_{k+1/2},u_{k+1/2}) + e_h(B_{k+1/2}, B_{k+1/2}), \label{energyh_dt} \\
\frac{1}{\Delta t} \left( \int_\Omega A_{k+1} \cdot B_{k+1} \, {\rm d}x - \int_\Omega A_k \cdot B_k \, {\rm d}x \right) &= 2e_h(B_{k+1/2},\pi_h^{\dv} A_{k+1/2}), \label{magnetichelicityh_dt}
\end{align}
for all $k$.  Here, $\mathcal{E}_k = \langle \frac{\delta\ell}{\delta u_k}, u_k \rangle - \ell(u_k,\rho_k,B_k)$ denotes the total energy of the system, and $A_k$ denotes any vector field satisfying $\nabla \times A_k = B_k$ and $\left.A_k \times n \right|_{\partial\Omega} = 0$.
\end{proposition}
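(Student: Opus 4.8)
The plan is to mirror the temporally-continuous argument of Proposition~\ref{prop:invariantsh}, replacing time derivatives by midpoint differences and exploiting the tailored discretization of $\theta$ in~\eqref{thetah_dt}. The first two identities are immediate. Since $\curl U_h^{\curl} \subset U_h^{\dv}$, both $\curl E$ and $\curl J$ lie in $U_h^{\dv}$, so~\eqref{magnetich_dt} holds pointwise as $\tfrac{B_{k+1}-B_k}{\Delta t} + \curl E = -\nu\curl J$; taking the divergence and using $\dv\curl = 0$ gives $\dv B_{k+1} = \dv B_k$, whence~\eqref{divBh_dt} follows by induction from $\dv B_0 = 0$. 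For~\eqref{massh_dt} I would set $\sigma = 1$ in~\eqref{densityh_dt} and invoke $b_h(1,\rho,u) = 0$ (valid since $\nabla 1 = 0$ and $\llbracket 1\rrbracket = 0$).

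For the energy balance~\eqref{energyh_dt} I would split $\mathcal{E}_{k+1} - \mathcal{E}_k$ into a kinetic-plus-internal part $\int_\Omega[\tfrac12\rho_{k+1}|u_{k+1}|^2 + \epsilon(\rho_{k+1}) - \tfrac12\rho_k|u_k|^2 - \epsilon(\rho_k)]\,{\rm d}x$ and a magnetic part $\int_\Omega\tfrac12(|B_{k+1}|^2 - |B_k|^2)\,{\rm d}x$. The point of~\eqref{thetah_dt} is that the telescoping identity~\eqref{rhousquared} applies to the first part; since $\tfrac{\rho_{k+1}-\rho_k}{\Delta t}\in F_h$, its second term equals $\langle\tfrac{\rho_{k+1}-\rho_k}{\Delta t},\theta\rangle$. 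I then test~\eqref{velocityh_dt} with $v = u_{k+1/2}$ (the $a$-term vanishing by the antisymmetry $a(w,u,u)=0$) and~\eqref{densityh_dt} with $\sigma = \theta$; the two copies of $b_h(\theta,\rho_{k+1/2},u_{k+1/2})$ cancel, leaving the kinetic-plus-internal part equal to $\Delta t\,[\,d(u_{k+1/2},u_{k+1/2}) - \langle\alpha,u_{k+1/2}\rangle\,]$. For the magnetic part, $\tfrac{1}{\Delta t}\int_\Omega\tfrac12(|B_{k+1}|^2-|B_k|^2)\,{\rm d}x = \langle\tfrac{B_{k+1}-B_k}{\Delta t}, B_{k+1/2}\rangle$; testing~\eqref{magnetich_dt} with $C = B_{k+1/2}$ and using $\langle\curl J, B_{k+1/2}\rangle = \langle J,J\rangle$ (from~\eqref{Jh_dt} with $K=J$) identifies the resistive contribution as $e_h(B_{k+1/2},B_{k+1/2})$, leaving a stray term $-\langle\curl E, B_{k+1/2}\rangle$.

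The crux is then to show these two stray terms cancel, i.e. $\langle\alpha, u_{k+1/2}\rangle = -\langle\curl E, B_{k+1/2}\rangle$. Here I would use that $U = \pi_h^{\curl}u_{k+1/2}$, $H = \pi_h^{\curl}B_{k+1/2}$, and $E = -\pi_h^{\curl}(U\times H)$ all lie in $U_h^{\curl}$. Testing~\eqref{Jh_dt} with $K = E$ gives $\langle\curl E, B_{k+1/2}\rangle = \langle J, E\rangle = -\langle J, U\times H\rangle$, while~\eqref{alphah_dt} with $\beta = U$ gives $\langle\alpha, u_{k+1/2}\rangle = \langle\alpha, U\rangle = -\langle J\times H, U\rangle$; the scalar triple-product identity $(J\times H)\cdot U = -J\cdot(U\times H)$ then supplies the required equality. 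Adding the two parts yields~\eqref{energyh_dt}.

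For the magnetic-helicity balance~\eqref{magnetichelicityh_dt} I would start from the algebraic splitting $\langle A_{k+1},B_{k+1}\rangle - \langle A_k,B_k\rangle = \langle A_{k+1}-A_k, B_{k+1/2}\rangle + \langle A_{k+1/2}, B_{k+1}-B_k\rangle$. Because $B_{k+1/2} = \curl A_{k+1/2}$ and each $A_j\times n|_{\partial\Omega}=0$, an integration by parts with vanishing boundary term shows the first summand equals the second, so the increment is $2\langle A_{k+1/2}, B_{k+1}-B_k\rangle$. As $\tfrac{B_{k+1}-B_k}{\Delta t}\in U_h^{\dv}$ I may replace $A_{k+1/2}$ by $\pi_h^{\dv}A_{k+1/2}$ and test~\eqref{magnetich_dt} with $C = \pi_h^{\dv}A_{k+1/2}$: the term $\langle\curl E, \pi_h^{\dv}A_{k+1/2}\rangle$ equals $c_h(\pi_h^{\dv}A_{k+1/2}, B_{k+1/2}, u_{k+1/2}) = c_h(A_{k+1/2}, B_{k+1/2}, u_{k+1/2})$, which vanishes by Lemma~\ref{lemma:ch_curl} since $\curl A_{k+1/2} = B_{k+1/2}$, and the resistive term equals $e_h(B_{k+1/2},\pi_h^{\dv}A_{k+1/2})$ by~\eqref{eh_identity} with $j=J$; multiplying by $2/\Delta t$ gives the claim. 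I expect the energy identity to be the main obstacle, since it relies simultaneously on the nonstandard definition of $\theta$ through $\delta(\rho_k,\rho_{k+1})$---calibrated precisely to make~\eqref{rhousquared} hold---and on the triple-product cancellation of $\langle\alpha,u_{k+1/2}\rangle$ against $\langle\curl E, B_{k+1/2}\rangle$; the helicity and divergence parts are comparatively routine transcriptions of the continuous-time arguments with midpoint values.
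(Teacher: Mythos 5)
Your proof is correct and follows essentially the same route as the paper: the same pointwise-divergence and $\sigma=1$ arguments, the same use of identity~(\ref{rhousquared}) with the tailored $\theta$, and the same polarization-plus-Lemma~\ref{lemma:ch_curl} argument for helicity. The only cosmetic difference is that for the energy balance you cancel $\langle\alpha,u_{k+1/2}\rangle$ against $\langle\curl E,B_{k+1/2}\rangle$ directly via the scalar triple product, whereas the paper first invokes Lemma~\ref{lemma:rectify} to rewrite both terms as $\pm c_h(B,B,u)$ --- the underlying algebra is identical.
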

\begin{proof}
The magnetic field equation~(\ref{magnetich_dt}) implies that the relation
\[
\frac{B_{k+1}-B_k}{\Delta t} + \curl E = -\nu \curl J
\]
holds pointwise, so taking the divergence of both sides proves~(\ref{divBh_dt}).  Conservation of total mass (i.e.~(\ref{massh_dt})) is proved by taking $\sigma \equiv 1$ in the density equation~(\ref{densityh_dt}).

To prove~(\ref{energyh_dt}-\ref{magnetichelicityh_dt}), we introduce some notation.  Let $D_{\Delta t} (\rho u) = \frac{\rho_{k+1} u_{k+1} - \rho_k u_k}{\Delta t}$, $D_{\Delta t} B = \frac{B_{k+1}-B_k}{\Delta t}$, etc.  To reduce notational clutter, we will suppress the subscript $k+1/2$ on quantities evaluated at $t_{k+1/2}$.  Thus, we abbreviate $u_{k+1/2}$, $B_{k+1/2}$, $\rho_{k+1/2}$, and $(\rho u)_{k+1/2}$ as $u$, $B$, $\rho$, and $\rho u$, respectively.   Using Lemma~\ref{lemma:rectify}, equations~(\ref{velocityh_dt}-\ref{alphah_dt}) can be rewritten in the form
\begin{align}
\left\langle D_{\Delta t} (\rho u), v \right\rangle + a\left(\rho u, u, v\right) + b_h\left( \theta,\rho,v \right) - c_h\left(B,B,v\right)&= d(u,v), && \forall v \in U_h^{\grad}, \label{velocity_res_NS_h_dt} \\
\langle D_{\Delta t} \rho, \sigma \rangle + b_h(\sigma,\rho,u) &=0, && \forall \sigma \in F_h,\label{density_res_NS_h_dt} \\
\langle D_{\Delta t} B, C \rangle + c_h(C,B,u) &= e_h(B,C), && \forall C \in U_h^{\dv}, \label{magnetic_res_NS_h_dt}
\end{align}
where
\[
\theta = \pi_h \left( \frac{1}{2}u_k \cdot u_{k+1} - \delta(\rho_k,\rho_{k+1}) \right).
\]
Taking $v=u$, $\sigma=-\theta$, and $C=B$ in~(\ref{velocity_res_NS_h_dt}-\ref{magnetic_res_NS_h_dt}) and adding the three equations gives
\[
\langle D_{\Delta t} (\rho u), u \rangle - \langle D_{\Delta t} \rho, \theta \rangle + \langle D_{\Delta t} B, B \rangle = d(u,u) + e_h(B,B).
\]
Written in full detail, this reads
\[\begin{split}
\left\langle \frac{\rho_{k+1} u_{k+1} - \rho_k u_k}{ \Delta t }, \frac{u_k+u_{k+1}}{2} \right\rangle - \left\langle \frac{\rho_{k+1}-\rho_k}{\Delta t}, \pi_h \left( \frac{1}{2} u_k \cdot u_{k+1} - \delta(\rho_k,\rho_{k+1}) \right) \right\rangle \\
+ \left\langle \frac{B_{k+1}-B_k}{\Delta t}, \frac{B_k+B_{k+1}}{2} \right\rangle = d(u_{k+1/2},u_{k+1/2}) + e_h(B_{k+1/2},B_{k+1/2}).
\end{split}\]
Since $\frac{\rho_{k+1}-\rho_k}{\Delta t} \in F_h$, we can remove $\pi_h$ from the second term above and use the identity~(\ref{rhousquared}) to rewrite the equation above as
\[\begin{split}
&\frac{1}{\Delta t} \int_\Omega \left( \frac{1}{2}\rho_{k+1} |u_{k+1}|^2 + \epsilon(\rho_{k+1})  - \frac{1}{2} \rho_k |u_k|^2 - \epsilon(\rho_k) \right)  \, {\rm d}x \\ &+ \frac{1}{\Delta t} \int_\Omega \left( \frac{1}{2}|B_{k+1}|^2 - \frac{1}{2}|B_k|^2 \right) \, {\rm d}x  = d(u_{k+1/2},u_{k+1/2}) + e_h(B_{k+1/2},B_{k+1/2}).
\end{split}\]
This proves~(\ref{energyh_dt}).  

To prove~(\ref{magnetichelicityh_dt}), we revert to our abbreviated notation and compute
\begin{align*}
\frac{1}{\Delta t} \left( \langle A_{k+1}, B_{k+1} \rangle - \langle A_k, B_k \rangle \right) 
&= \langle D_{\Delta t} A, B \rangle + \langle A, D_{\Delta t} B \rangle \\
&= \langle D_{\Delta t} A, \curl A \rangle + \langle A, D_{\Delta t} B \rangle \\
&=  \left\langle \curl D_{\Delta t} A,   A \right\rangle + \left\langle A,  D_{\Delta t} B \right\rangle\\
&= 2 \left\langle D_{\Delta t} B, A \right\rangle \\
&= 2 \left\langle D_{\Delta t} B, \pi_h^{\dv} A \right\rangle \\
&= - 2 c_h(\pi_h^{\dv} A,B,u)+ 2e_h(B,\pi_h^{\dv} A) \\
&= - 2 c_h(A,B,u)+ 2e_h(B,\pi_h^{\dv} A).
\end{align*}
The first term vanishes by Lemma~\ref{lemma:ch_curl}, yielding~(\ref{magnetichelicityh_dt}).
\end{proof}

\subsection{Enhancements and Extensions}

Below we discuss several enhancements and extensions of the numerical method~(\ref{velocityh_dt}-\ref{alphah_dt}).

\paragraph{Two dimensions.}  

The two-dimensional setting can be treated exactly as above, except one must distinguish between vector fields in the plane ($u$, $B$, $H$, $U$, and $\alpha$) and those orthogonal to it ($J$ and $E$).  We thus identify $J$ and $E$ (as well as the test functions $K$ and $F$ appearing in~(\ref{Jh_dt}) and~(\ref{Eh_dt})) with scalar fields, and we use the continuous Galerkin space  
\[
\{u \in H^1_0(\Omega) \mid \left. u \right|_K \in P_{r+1}(K), \, \forall K \in \mathcal{T}_h \}
\]
to discretize them.

\paragraph{Upwinding.}  To help reduce artificial oscillations in discretizations of scalar advection laws like~(\ref{densityh_dt}), it is customary to incorporate upwinding.  As discussed in~\cite{GaGB2021}, this can be accomplished without interfering with any balance laws by introducing a $u$-dependent trilinear form
\[
\widetilde{b}_h(u; f,g,v) = b_h(f,g,v) + \sum_{e \in \mathcal{E}_h} \int_e \beta_e(u) \left( \frac{v \cdot n}{u \cdot n} \right) \llbracket f \rrbracket \cdot \llbracket g \rrbracket \, \mathrm{d}s,
\]
where $\{\beta_e(u)\}_{e \in \mathcal{E}_h}$ are nonnegative scalars.  One then replaces every appearance of $b_h(\cdot,\cdot,\cdot)$ in~(\ref{velocityh_dt}-\ref{densityh_dt}) by $\widetilde{b}_h(u_{k+1/2};\,\cdot,\cdot,\cdot)$.  It is not hard to see that this enhancement has no effect on the balance laws~(\ref{divBh_dt}-\ref{magnetichelicityh_dt}).  That is, Proposition~\ref{prop:invariantsh_dt} continues to hold.  We used this upwinding strategy with \[
\beta_e(u) = \frac{1}{\pi} (u \cdot n) \arctan\left( \frac{u \cdot n}{0.01} \right) \approx \frac{1}{2}|u \cdot n|
\]
in all of the numerical experiments that appear in Section~\ref{sec:numerical}.

\paragraph{Zero viscosity.}

When the fluid viscosity coefficients $\mu$ and $\lambda$ vanish, the boundary condition $\left. u \right|_{\partial\Omega} = 0$ changes to $\left. u \cdot n \right|_{\partial\Omega} = 0$, and the term $d(u,v)$ on the right-hand side of~(\ref{velocity_res_NS}) vanishes.  
To handle this setting, we modify the scheme~(\ref{velocityh_dt}-\ref{alphah_dt}) as follows.  We use the space $U_h^{\dv}$ instead of $U_h^{\grad}$ to discretize $u$ (as well as the test function $v$ appearing on the right-hand side of~(\ref{velocityh_dt})), and we replace the term $a((\rho u)_{k+1/2}, u_{k+1/2}, v)$ by 
\[
a_h(u_{k+1/2}; (\rho u)_{k+1/2}, u_{k+1/2}, v),
\]
where $a_h(U;\cdot,\cdot,\cdot)$ denotes the $U$-dependent trilinear form
\[\begin{split}
a_h(U; w,u,v) = \sum_{K \in \mathcal{T}_h} \int_K w \cdot (v \cdot \nabla u - u \cdot \nabla v) \, {\rm d}x + \sum_{e \in \mathcal{E}_h} \int_e n \times (\{w\}+\alpha_e(U)\llbracket w \rrbracket) \cdot \llbracket u \times v \rrbracket \, {\rm d}s.
\end{split}\]
Here, $\llbracket w\rrbracket$ and $\{w\}$ denote the jump and average, respectively, of $w$ across the edge $e$, and $\{\alpha_e(U)\}_{e \in \mathcal{E}_h}$ are nonnegative scalars. We took $\alpha_e(U)=\beta_e(U)/(U \cdot n)$, which is a way of incorporating upwinding in the momentum advection; see~\cite{GaGB2020}.  Note that $a_h(U;w,u,v)$ reduces to $a(w,u,v)$ when $u,v \in U_h^{\grad}$ since the term $\llbracket u \times v \rrbracket$ vanishes. Here also, Proposition~\ref{prop:invariantsh_dt} continues to hold (with $b_h$ or $\widetilde{b}_h$), where we now have  $d=0$ in \eqref{energyh_dt}.  If additionally $\nu=0$, i.e. resistivity is absent, then we have $e_h=0$ in~(\ref{energyh_dt}-\ref{magnetichelicityh_dt}) as well.

\paragraph{Entropy and gravitational forces.}

The extension of our scheme to full compressible ideal MHD subject to gravitational and Coriolis forces is straightforward.  Here we describe the incorporation of the entropy density $s$ and the gravitational potential $\phi$, omitting Coriolis forces for simplicity.  The Lagrangian~(\ref{baroclinic_ell}) is thus
\[
\ell(u, \rho ,s , B)= \int_ \Omega  \Big[\frac{1}{2} \rho  |u|^2 - \epsilon ( \rho ,s ) - \rho  \phi - \frac{1}{2} |B| ^2\Big] {\rm d} x,
\]
and $s$ is treated as an advected parameter: $\partial_t s + \dv (su) = 0$.

In the discrete setting, this leads to the following modifications of our basic scheme.   We introduce an additional unknown $s_k \in F_h$, the discrete entropy density, which is advected according to
\begin{align}
\left\langle \frac{s_{k+1}-s_k}{\Delta t}, \sigma \right\rangle + \widetilde{b}_h(u_{k+1/2};\sigma,s_{k+1/2},u_{k+1/2}) &= 0, && \forall \sigma \in F_h. \label{entropyh_dt_baro}
\end{align}
In place of~(\ref{thetah_dt}), we define two auxiliary variables $\theta_1,\theta_2 \in F_h$ by
\begin{align}
\langle \theta_1, \tau \rangle &= \left\langle \frac{1}{2}u_k \cdot u_{k+1} - \phi - \frac{\delta_1(\rho_k,\rho_{k+1},s_k) + \delta_1(\rho_k,\rho_{k+1},s_{k+1})}{2}, \tau \right\rangle, && \forall \tau \in F_h, \label{thetah_dt_baro1} \\
\langle \theta_2, \tau \rangle &= \left\langle - \frac{\delta_2(s_k,s_{k+1},\rho_k) + \delta_2(s_k,s_{k+1},\rho_{k+1})}{2}, \tau \right\rangle, && \forall \tau \in F_h, \label{thetah_dt_baro2} 
\end{align}
where
\begin{align*}
\delta_1(\rho,\rho',s) &= \frac{\epsilon(\rho',s)-\epsilon(\rho,s)}{\rho'-\rho}, \\
\delta_2(s,s',\rho) &= \frac{\epsilon(\rho,s')-\epsilon(\rho,s)}{s'-s}.
\end{align*}
Then we replace the term
\[
b_h(\theta,\rho_{k+1/2},v)
\]
in~(\ref{velocityh_dt}) by
\[
\widetilde{b}_h(u_{k+1/2},\theta_1,\rho_{k+1/2},v) + \widetilde{b}_h(u_{k+1/2},\theta_2,s_{k+1/2},v).
\]
The resulting scheme satisfies all of the balance laws~(\ref{divBh_dt}-\ref{magnetichelicityh_dt}), this time with
\begin{align*}
\mathcal{E}_k 
&= \left\langle \frac{\delta\ell}{\delta u_k}, u_k \right\rangle - \ell(u_k,\rho_k,B_k,s_k) \\
&= \int_\Omega \Big[ \frac{1}{2}\rho_k |u_k|^2 + \frac{1}{2}|B_k|^2 + \epsilon(\rho_k,s_k) + \rho_k \phi \Big] \, {\rm d}x.
\end{align*}
Note that this scheme is especially relevant in the absence of viscosity and resistivity, i.e. $d(u,v)=e_h(B,C)=0$, since the entropy density $s$ is treated as an advected parameter above.  Nevertheless, we have found it advantageous in some of our numerical experiments to continue to include the terms $d(u,v)$ and $e_h(B,C)$ to promote stability.

\section{Numerical examples} \label{sec:numerical}

To illustrate the structure-preserving properties of our numerical method, we solved the compressible barotropic MHD equations~(\ref{velocity0}-\ref{IC}) with $\mu=\nu=\lambda=0$ and $\epsilon(\rho) = \rho^{5/3}$ on a three-dimensional domain $\Omega = [-1,1]^3$  with initial conditions
\begin{align}
u(x,y,z,0) &= \left( \sin(\pi x)\cos(\pi y)\cos(\pi z), \, \cos(\pi x)\sin(\pi y)\cos(\pi z), \, \cos(\pi x)\cos(\pi y)\sin(\pi z) \right), \label{u03d} \\
B(x,y,z,0) &= \curl \left( (1-x^2)(1-y^2)(1-z^2) v \right), \label{B03d} \\
\rho(x,y,z,0) &= 2 + \sin(\pi x)\sin(\pi y)\sin(\pi z), \label{rho03d}
\end{align}
where $v = \frac{1}{2}(\sin \pi x, \sin \pi y, \sin \pi z)$.  We used a uniform triangulation $\mathcal{T}_h$ of $\Omega$ with maximum element diameter $h = \sqrt{3}/2$, and we used the finite element spaces specified in Section~\ref{sec:spatial} of order $r=s=0$.  We used a time step $\Delta t = 0.005$.   Figure~\ref{fig:invariants3d} shows that the scheme preserves energy, magnetic helicity, mass, and $\dv B = 0$ to machine precision, while cross helicity drifts slightly.

\begin{figure}
\centering
\hspace{-0.5in}
\begin{tikzpicture}
\begin{groupplot}[
group style={
    group name=my plots,
    group size=1 by 1,
    xlabels at=edge bottom,
    ylabels at=edge left,
    horizontal sep=2cm,vertical sep=2cm},
    ymode=log,xlabel=$t$,ylabel=$|F(t)-F(0)|$,
    ylabel style={yshift=-0.1cm},
    ymin = 10^-20,
    ymax = 1,
    legend style={at={(1.0,0.0)},
	anchor=south,
	column sep=1ex,
	text=black},
    legend columns=6
    ]
\nextgroupplot[legend to name=testLegend3d,title={$$}]
\addplot[blue,thick] table [x expr=\coordindex*0.005, y expr=abs(\thisrowno{0})]{Data/3d_invariants.dat};
\addplot[red,thick] table [x expr=\coordindex*0.005, y expr=abs(\thisrowno{1})]{Data/3d_invariants.dat};
\addplot[black,thick] table [x expr=\coordindex*0.005, y expr=abs(\thisrowno{2})]{Data/3d_invariants.dat};
\addplot[blue,dashed,thick] table [x expr=\coordindex*0.005, y expr=abs(\thisrowno{3})]{Data/3d_invariants.dat};
\addplot[red,dashed,thick] table [x expr=\coordindex*0.005, y expr=abs(\thisrowno{4})]{Data/3d_invariants.dat};
\legend{$\int \rho \, {\rm d}x$ \\ $\int[ \frac{1}{2}  \rho |u|^2 + \frac{1}{2}|B|^2 + \epsilon(\rho) ]\, {\rm d}x$ \\$\int u \cdot B \, {\rm d}x$ \\ $\|\dv B\|_{L^2(\Omega)}$ \\ $\int A \cdot B \, {\rm d}x$\\}
\end{groupplot}
\end{tikzpicture}
\ref{testLegend3d}
\caption{Evolution of mass, energy, cross-helicity, $\|\dv B\|_{L^2(\Omega)}$, and magnetic helicity during a simulation in three dimensions.  The absolute deviations $|F(t)-F(0)|$ are plotted for each such quantity $F(t)$.}
\label{fig:invariants3d}
\end{figure}

Next, we simulated a magnetic Rayleigh-Taylor instability on the domain $\Omega = [0,L] \times [0,4L]$ with $L=\frac{1}{4}$.  We chose
\[
\epsilon(\rho,s) = K e^{s/(C_v \rho)} \rho^\gamma
\]
with $C_v=K=1$ and $\gamma=\frac{5}{3}$, and we set $\mu=\nu=\lambda=0.01$ and $\phi=-y$, which corresponds to an upward gravitational force.
As initial conditions, we took
\begin{align*}
\rho(x,y,0) &= 1.5-0.5\tanh\left( \frac{y-0.5}{0.02} \right), \\
u(x,y,0) &= \left( 0, -0.025\sqrt{\frac{\gamma p(x,y)}{\rho(x,y,0)}} \cos(8\pi x) \exp\left( -\frac{(y-0.5)^2}{0.09}\right) \right), \\
s(x,y,0) &= C_v \rho(x,y,0) \log\left( \frac{p(x,y)}{(\gamma-1)K\rho(x,y,0)^\gamma} \right), \\
B(x,y,0) &= (B_0,0),
\end{align*}
where
\[
p(x,y) = 1.5y + 1.25 + (0.25-0.5y) \tanh\left( \frac{y-0.5}{0.02} \right).
\]
This system is known to exhibit instability when $B_0 <B_c= \sqrt{( \rho  _h - \rho  _l) gL}$, where here $ \rho  _h=2$, $ \rho  _l=1$, $g=1$, $L=1/4$, \cite{Ch1961}.

We imposed boundary conditions $u = (B-(B_0,0)) \cdot n = \curl B \times n = 0$ on $\partial\Omega$.  We triangulated $\Omega$ with a uniform triangulation $\mathcal{T}_h$ having maximum element diameter $h=2^{-7}$, and we used the finite element spaces specified in Section~\ref{sec:spatial} of order $r=s=0$.  We ran simulations from $t=0$ to $t=5$ using a time step $\Delta t = 0.005$.  Plots of the computed mass density for various choices of $B_0$ are shown in Figures~\ref{fig:B0p2}-\ref{fig:B0p8}.  The figures indicate that the scheme correctly predicts instability for $B_0<B_c=0.5$ (Figures~\ref{fig:B0p2}-\ref{fig:B0p4}) and stability for $B_0>B_c=0.5$ (Figures~\ref{fig:B0p6}-\ref{fig:B0p8}).  

The evolution of energy, cross-helicity, mass, and $\|\dv B\|_{L^2(\Omega)}$ during these simulations is plotted in Figure~\ref{fig:invariants}.  (Magnetic helicity is not plotted since it is trivially preserved in two dimensions if we take $A$ to be orthogonal to the plane.)  As predicted by Proposition~\ref{prop:invariantsh_dt}, Figure~\ref{fig:invariants} shows that energy decayed monotonically, while mass and $\dv B = 0$ were preserved to machine precision.  Interestingly, the cross-helicity drifted by less than $2.5 \times 10^{-4}$ in these experiments, even though the scheme is not designed to preserve it.

\begin{figure}
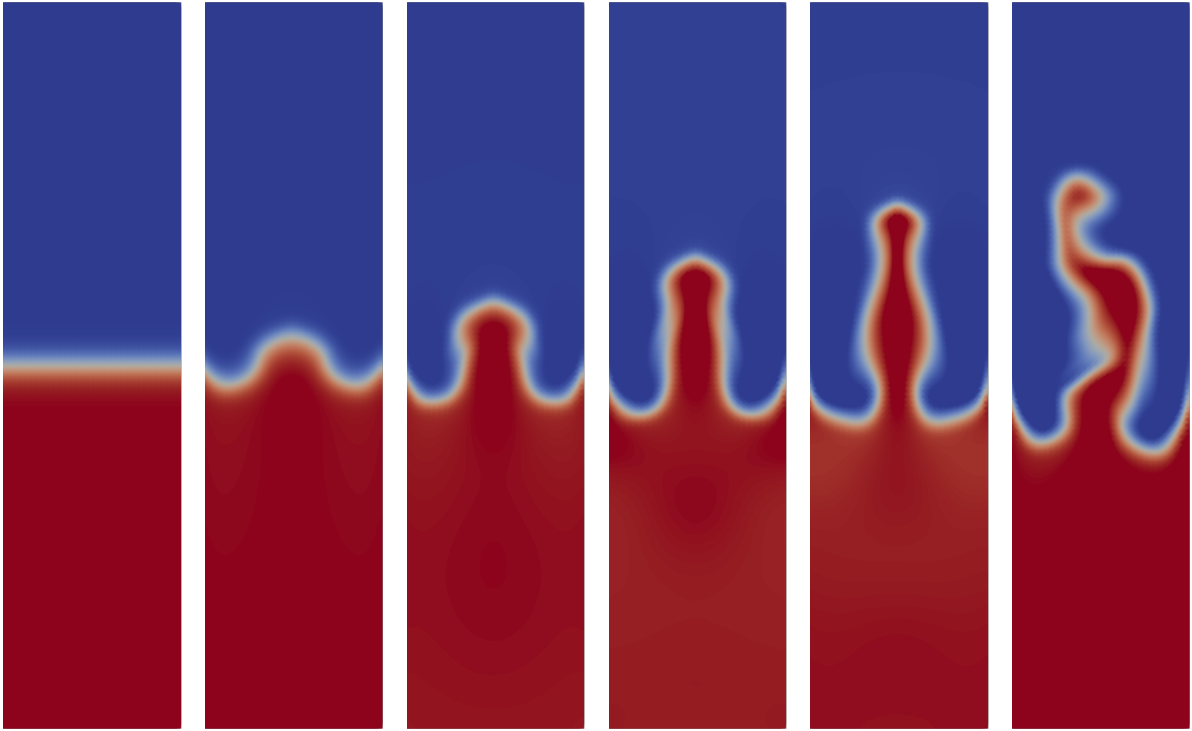

\centering
\plotsnapshots{Figures/B0p2}
\caption{Contours of the mass density at $t=0,1,2,3,4,5$ when $B_0=0.2$.}
\label{fig:B0p2}
\end{figure}

\begin{figure}
\centering
\plotsnapshots{Figures/B0p4}
\caption{Contours of the mass density at $t=0,1,2,3,4,5$ when $B_0=0.4$.}
\label{fig:B0p4}
\end{figure}

\begin{figure}
\centering
\plotsnapshots{Figures/B0p6}
\caption{Contours of the mass density at $t=0,1,2,3,4,5$ when $B_0=0.6$.}
\label{fig:B0p6}
\end{figure}

\begin{figure}
\centering
\plotsnapshots{Figures/B0p8}
\caption{Contours of the mass density at $t=0,1,2,3,4,5$ when $B_0=0.8$.}
\label{fig:B0p8}
\end{figure}

\begin{figure}
\centering
\hspace{-0.5in}
\begin{tikzpicture}
\begin{groupplot}[
group style={
    group name=my plots,
    group size=2 by 2,
    xlabels at=edge bottom,
    ylabels at=edge left,
    horizontal sep=2cm,vertical sep=2cm},
    ymode=log,xlabel=$t$,ylabel=$|F(t)-F(0)|$,
    ylabel style={yshift=-0.1cm},
    ymin = 10^-20,
    ymax = 1,
    legend style={at={(1.0,0.0)},
	anchor=south,
	column sep=1ex,
	text=black},
    legend columns=6
    ]
\nextgroupplot[legend to name=testLegend,title={$B_0=0.2$}]
\plotinvariants{Data/B0p2_invariants.dat}
\legend{$\int \rho \, {\rm d}x$ \\ $\int  [\frac{1}{2}  \rho |u|^2 + \frac{1}{2}|B|^2  + \epsilon(\rho, s) + \rho\phi  ]\, {\rm d}x$ \\$\int u \cdot B \, {\rm d}x$ \\ $\|\dv B\|_{L^2(\Omega)}$ \\}
\nextgroupplot[title={$B_0=0.4$}]
\plotinvariants{Data/B0p4_invariants.dat}
\nextgroupplot[title={$B_0=0.6$}]
\plotinvariants{Data/B0p6_invariants.dat}
\nextgroupplot[title={$B_0=0.8$}]
\plotinvariants{Data/B0p8_invariants.dat}
\end{groupplot}
\end{tikzpicture}
\ref{testLegend}
\caption{Evolution of mass, energy, cross-helicity, and $\|\dv B\|_{L^2(\Omega)}$ during simulations of the magnetic Rayleigh-Taylor instability with $B_0=0.2,0.4,0.6,0.8$.  The absolute deviations $|F(t)-F(0)|$ are plotted for each such quantity $F(t)$.}
\label{fig:invariants}
\end{figure}
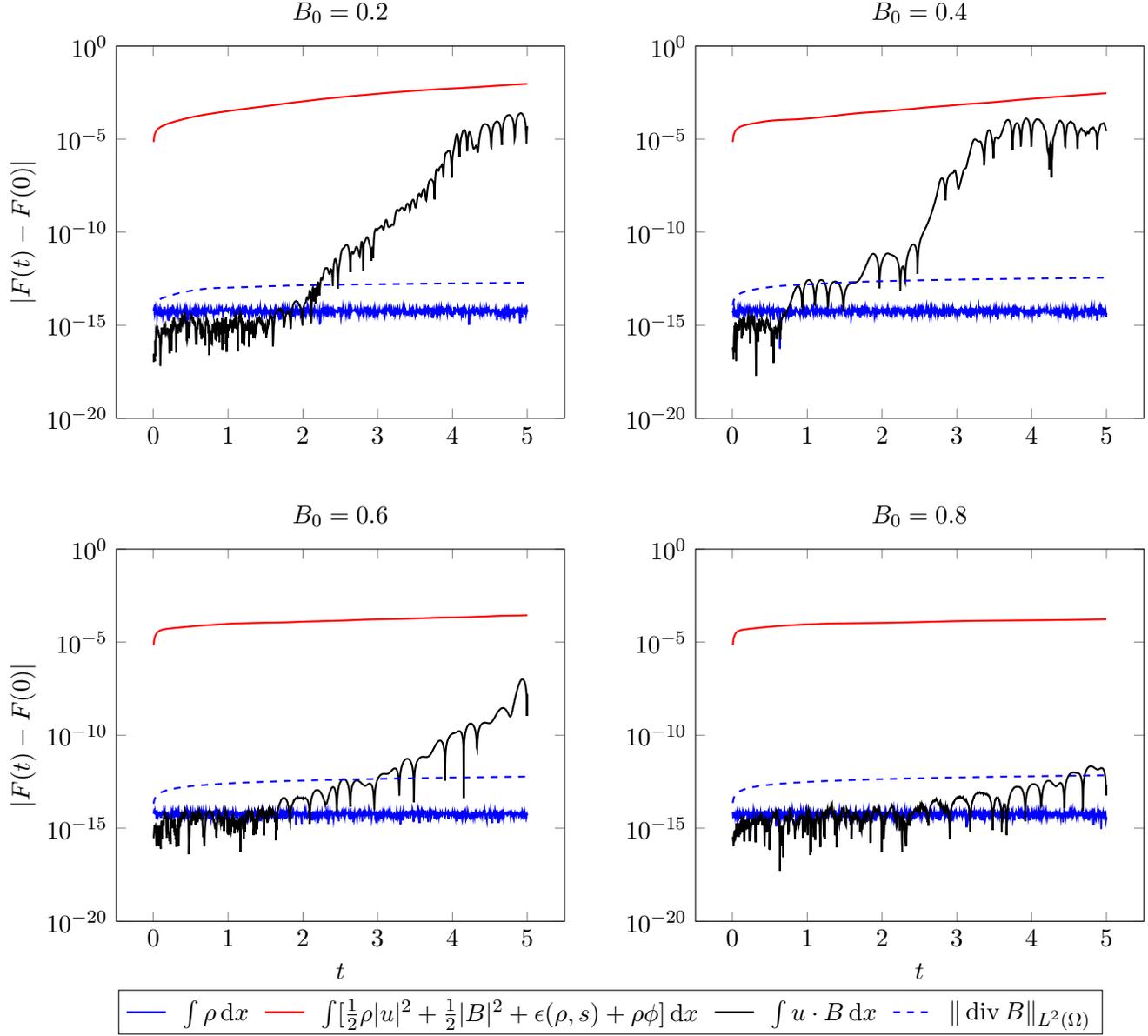

\appendix

\section{Lagrange-d'Alembert formulation of resistive MHD}\label{LdA}

In this appendix we explain how viscosity and resistivity can be included in the Lagrangian variational formulation by using the Lagrange-d'Alembert principle for forced systems. While viscosity can be quite easily included in the variational formulation by adding the corresponding virtual force term to the Euler-Poincar\'e principle, resistivity breaks the transport equation \eqref{B_frozen} hence the previous Euler-Poicar\'e approach for $B$ must be  appropriately modified.

We first observe that in absence of viscosity and resistivity, equations \eqref{EP_equations} can also be obtained from a variational formulation in which the variations $ \delta B$ are unconstrained. It suffices to consider, instead of \eqref{HP_MHD},  the variational principle
\begin{equation}\label{HP_modified} 
\delta \int_0^TL( \varphi , \partial _t \varphi , \varrho _0, \mathcal{B} ) - \left\langle \partial _t \mathcal{B} , \mathcal{C} \right\rangle  {\rm d} t=0,
\end{equation} 
with respect to arbitrary variations $ \delta \varphi $, $ \delta \mathcal{B} $, $ \delta \mathcal{C}$ with $ \delta \varphi $ and $ \delta \mathcal{B} $ vanishing at $t=0,T$. The second term in the action functional imposes $ \partial _t \mathcal{B} =0$, i.e., $ \mathcal{B} (t) =\mathcal{B} _0$. In Eulerian form, we get
\begin{equation}\label{EP_MHD_force_ideal} 
\delta \int_0^T\ell(u, \rho  , B)  - \left\langle \partial _t B -  \operatorname{curl}  (u \times B), C \right\rangle {\rm d}t =0
\end{equation} 
with constrained variations $\delta u = \partial _t v+ \pounds _uv$, $\delta \rho  = - \operatorname{div}( \rho  v)$ and free variations $ \delta B$, $ \delta C$ with $ v, \delta B$ vanishing at $t=0,T$. In \eqref{EP_MHD_force_ideal}, the magnetic field equation appears as a constraint with Lagrange multiplier $C$. The variational principle \eqref{EP_MHD_force_ideal} yields the three equations
\begin{align}
\partial _t \left( \frac{\delta \ell}{\delta u} + B \times  \operatorname{curl} C\right) + \pounds _u \left( \frac{\delta \ell}{\delta u} +B \times  \operatorname{curl} C  \right) & = \rho  \nabla \frac{\delta \ell}{\delta \rho  } \label{momentum_eq_2}\\
\partial_t B -     \operatorname{curl}  (u \times B)  &= 0 \label{B_eq_2}\\
\partial _t C + \operatorname{curl} C \times u + \frac{\delta \ell}{\delta B} &=0\label{C_eq_2}
\end{align}
which correspond to the variations associated to $v$, $ \delta B$, and $\delta  C$, respectively.
Using the formula
\[
\pounds _u ( B \times  \operatorname{curl}C ) = B \times \operatorname{curl}  ( \operatorname{curl}C \times u) +  \operatorname{curl}  (B \times u) \times   \operatorname{curl}C
\]
and \eqref{B_eq_2}--\eqref{C_eq_2} in the equations \eqref{momentum_eq_2} does yield \eqref{EP_equations}. 

\medskip 

Using the Lagrange-d'Alembert approach, the variational principle \eqref{HP_modified} can be modified as
\begin{equation}\label{HP_modified_force} 
\delta \int_0^TL( \varphi , \partial _t \varphi , \varrho _0, \mathcal{B} ) - \left\langle \partial _t \mathcal{B} , \mathcal{C} \right\rangle  {\rm d} t + \int_0^T  D(\varphi , \partial _t \varphi , \delta \varphi )+ E(\varphi , \mathcal{B} , \delta \mathcal{C})  {\rm d}t=0,
\end{equation}
for some expressions $D$ and $E$, bilinear in their last two arguments and invariant under the right action of $ \operatorname{Diff}( \Omega )$. In the Eulerian form, one gets
\begin{equation}\label{EP_MHD_force} 
\delta \int_0^T\ell(u, \rho  , B)  - \left\langle \partial _t B -  \operatorname{curl}  (u \times B), C \right\rangle {\rm d}t + \int_0^T  d( u,v)  + e(B, \delta C + \operatorname{curl} C \times v ){\rm d}t=0,\\
\end{equation} 
with $e$ and $f$ given by the expressions of $E$ and $F$ evaluated at $ \varphi = id$. To model viscosity and resistivity we choose \eqref{bilinear_form} and change the boundary condition of velocity to  $u|_{ \partial \Omega }=0$. This boundary condition corresponds in the Lagrangian description to the choice of the subgroup $ \operatorname{Diff}_0( \Omega ) $ of diffeomorphisms fixing the boundary pointwise. Application of \eqref{EP_MHD_force} yields the viscous and resistive barotropic MHD equations in the form 
\begin{align}
\left\langle \partial_t \frac{\delta \ell}{\delta u} , v \right\rangle + a\left(\frac{\delta \ell}{\delta u}, u, v\right)  + b\left( \frac{\delta \ell}{\delta \rho},\rho,v \right) + c\left(\frac{\delta \ell}{\delta B},B,v\right)&= d(u,v)  \label{velocity_res_NS2} \\
\langle \partial_t \rho, \sigma \rangle + b(\sigma,\rho,u) &=0\label{density_res_NS2} \\
\phantom{\int}\langle \partial_t B, C \rangle + c(C,B,u) &= e(B,C).\label{magnetic_res_NS2}
\end{align}

\end{document}